\definecolor{darkred}{rgb}{0.4,0.1,0.1}
\definecolor{darkblue}{rgb}{0.1,0.1,0.4}
\theoremstyle{plain}% default
\newtheorem{hyp}{Hypothesis}[section]
\newtheorem{thm}{Theorem}[section]
\newtheorem{example}{Example}[section]
\newtheorem*{thm*}{Theorem}
\newtheorem{lem}[thm]{Lemma}
\newtheorem{prop}[thm]{Proposition}
\newtheorem{cor}[thm]{Corollary}
\newtheorem{dfn}[thm]{Definition}
\theoremstyle{remark}
\newtheorem{remark}[thm]{Remark}
\theoremstyle{plain}
\newcommand{\be}{\begin{equation}}
\newcommand{\ee}{\end{equation}}
\newcommand{\beu}{\begin{equation*}}
\newcommand{\eeu}{\end{equation*}}
\newcommand{\besu}{\begin{equation*}
\begin{aligned}}
\newcommand{\eesu}{\end{aligned}
\end{equation*}}
\newcommand{\bes}{\begin{equation}
\begin{aligned}}
\newcommand{\ees}{\end{aligned}
\end{equation}}
\newcommand\cB{\mathcal B}
\newcommand\cF{\mathcal F}
\newcommand\cH{\mathcal H}
\newcommand\cM{\mathcal M}
\newcommand\cR{\mathcal R}
\newcommand\cS{\mathcal S}
\renewcommand\frq{\mathfrak q}
\newcommand\ov{\overline}
\newcommand\wt{\widetilde}
\newcommand\wh{\widehat}
\newcommand\sess{\sigma_{\rm ess}}
\newcommand\void[1]{}
\def\sess{\sigma_{\rm ess}}
\def\ran{{\rm ran}\,}
\def\frt{{\mathfrak t}}
      \def\dC{{\mathbb C}}
   \def\dN{{\mathbb N}}   
      \def\dR{{\mathbb R}}
   \def\dZ{{\mathbb Z}}
   \def\cB{{\mathcal B}}   
      \def\cF{{\mathcal F}}
   \def\cH{{\mathcal H}}
\def\cM{{\mathcal M}}      
   \def\cQ{{\mathcal Q}}   \def\cR{{\mathcal R}}
\def\cS{{\mathcal S}}
\newcommand{\dom}{\mathrm{dom}\,}
\numberwithin{equation}{section}
\title[
2D Schr\"odinger operators perturbed by finite measures ]{Weakly
coupled bound state of 2D Schr\"odinger operator with
potential-measure}
\author{Sylwia Kondej and  Vladimir Lotoreichik}
\begin{document}

\maketitle

%\begin{center}
%(A) Institute of Physics, University of %Zielona G\'{o}ra, ul. Szafrana 4a, 65246 Zielona G\'{o}ra, Poland\\
%(B)
%\end{center}

%\today

\begin{abstract}
We consider a self-adjoint two-dimensional Schr\"odinger operator
$H_{\alpha\mu}$, which corresponds to the formal differential
expression
\[
-\Delta - \alpha\mu,
\]
where $\mu$ is a finite compactly supported positive Radon measure
on $\dR^2$ from the generalized Kato class and $\alpha >0$ is the
coupling constant. It  was proven earlier that $\sigma_{\rm
ess}(H_{\alpha\mu}) = [0,+\infty)$. We show that for sufficiently
small $\alpha$ the condition $\sharp\sigma_{\rm d}(H_{\alpha\mu})
= 1$ holds and that the corresponding unique eigenvalue has the
asymptotic expansion
\[
\lambda(\alpha) = -(C_\mu +
o(1))\exp\Big(-\tfrac{4\pi}{\alpha\mu(\dR^2)}\Big), \qquad
\alpha\rightarrow 0+,
\]
with a certain constant $C_\mu > 0$. We obtain also the formula
for the computation of $C_\mu$.  The asymptotic expansion of the
corresponding eigenfunction is provided. The statements of this
paper extend Simon's results, see \cite{Si76}, to the case of
potentials-measures. Also for regular potentials our results are
partially new.
\end{abstract}

%\tableofcontents

\section{Introduction}

Let  us consider a non-relativistic quantum particle living in a
two-dimen\-sional system and moving under the influence of the
potential $V\colon \dR^2\rightarrow \dR$  such that there exists
$\delta
>0$ for which
\begin{equation}
\label{a1} \int_{\dR^2} |V(x)|^{1+\delta} <
\infty\quad\text{and}\quad\int_{\dR^2}|V(x)|(1+|x|^\delta) <
\infty\,.
\end{equation}
The operator
$$
H_{\alpha V}=-\Delta - \alpha V\,:\, \dom H_{\alpha V}  \to  L^2
(\dR^2)
$$
is self-adjoint with $\dom H_{\alpha V} =H^2 (\dR^2)$ and it
determines the Hamiltonian of our system. This operator represents
the sesquilinear form
\[
\frt_{\alpha V}[f,g] := (\nabla f,\nabla g)_{L^2(\dR^2;\dC^2)} -
\alpha (V f,g)_{L^2(\dR^2)},\qquad \dom \frt_{\alpha V} :=
H^1(\dR^2).
\]
The spectrum of $H_{\alpha V}$ can not be computed explicitly for
an arbitrary potential. For this reason spectral estimates and
asymptotic expansions of spectral quantities related  to
$H_{\alpha V}$ attract a lot of attention. Weak coupling
asymptotic regime belongs to this line of research.  It was shown
by Simon in \cite{Si76} that under the assumptions
\begin{equation}\label{eq-potetial}
\int_{\dR^2} V(x) \ge 0\quad\text{and} \quad V\neq 0
\end{equation}
the operator $H_{\alpha V}$ has at least one bound state for any
 $\alpha >0$; moreover for $\alpha $ small the corresponding lowest
eigenvalue asymptotically behaves as
$$
\lambda(\alpha )\sim -\exp \Big(\Big[ \frac{\alpha }{4\pi}
\int_{\dR^2} V(x)\Big] ^{-1} \Big),
\qquad \alpha\rightarrow 0+,
$$
provided inequality \eqref{eq-potetial} is sharp;
~cf.~\cite[Theorem 3.4]{Si76}.

The problem  we study in this paper, is addressed in a certain
respect to a more general class of potentials which, for example,
includes so-called \emph{singular interactions}. To sketch the
physical context suppose that a particle in confined by a quantum
wire with  possibility of tunnelling. Consequently, the whole
space $\dR^2$ is available for the particle. On the other hand, if
the wire is very thin we can make an idealization and assume that
the particle is localized in the vicinity of the set $\Sigma
\subset \dR^2$ of a lower dimension. The Hamiltonian of such a
system can be formally written as
$$
-\Delta -\alpha\, \delta_\Sigma\,,\qquad \alpha >0 \,,
$$
where $\delta_\Sigma $ denotes  the Dirac measure supported on
$\Sigma$, see \cite{E08} for the review on such Hamiltonians. More
generally, one can speak of
$$
-\Delta -\alpha\, \mu \,, \qquad \alpha >0 \,,
$$
where $\mu $ is a positive finite Radon measure on $\dR^2$. In
order to give a mathematical meaning to the above formal
expression we assume that $\mu $ belongs to the generalized Kato
class as in Definition~\ref{dfn:Kato}. Under this assumption the
embedding of $H^1 (\dR^2 )$ into $L^2 (\dR^2;d\mu)$ is well
defined and the following closed, densely defined, symmetric and
lower-semibounded  sesquilinear form
\[
\frt_{\alpha\mu}[f,g] := (\nabla f,\nabla g)_{L^2(\dR^2;\dC^2)} -
\alpha \int_{\dR^2} f(x)\ov{g(x)}d\mu(x),\quad \dom \frt_{\alpha
\mu} := H^1(\dR^2),
\]
induces the uniquely defined self-adjoint operator $H_{\alpha
\mu}$ in $L^2(\dR^2)$. It is known that $\sigma_{\rm
ess}(H_{\alpha\mu}) = [0,+\infty)$, see \cite[Theorem
3.1]{BEKS94}. The following theorem contains all the main results
of the paper.
\begin{thm*}
Let $\mu $ be a compactly supported positive finite Radon measure
on $\dR^2$ from the generalized Kato class and $H_{\alpha\mu}$ be
the self-adjoint operator defined above. Then the following
statements hold.
\begin{itemize}\setlength{\parskip}{0.2cm}
\item [\rm (i)] For $\alpha > 0$ sufficiently small we have
\[
\sharp\sigma_{\rm d}(H_{\alpha\mu})  = 1\,.
\]
Denote this unique eigenvalue by $\lambda(\alpha) < 0$ and the
corresponding eigenfunction by $f_\alpha\in L^2(\dR^2)$.
\item [\rm (ii)] The asymptotic expansion of $\lambda (\alpha )$
takes  the form
\begin{equation*}
\label{asymp1} \lambda(\alpha) =  -(C_\mu +
o(1))\exp\big(-\tfrac{4\pi}{\alpha \mu(\dR^2)}\big), \qquad\alpha
\rightarrow 0+\,,
\end{equation*}
where  the constant $C_\mu$ is given in \eqref{Cmu}.
\item [\rm (iii)] Set $k_\alpha = (-\lambda (\alpha ))^{1/2}$.
Then the corresponding eigenfunction admits the following
expansion
\[
f_\alpha(\cdot) = \frac{k_\alpha}{2\pi}  \int_{\dR^2 }K_0 (
k_\alpha |\cdot-y| )d\mu (y) +O \Big( \frac{1}{\ln k_\alpha
}\Big),\qquad \alpha\rightarrow 0+,
\]
where $K_0(\cdot)$ is the Macdonald function, the norm of the
first summand has non-zero finite limit as $\alpha\rightarrow 0+$,
and the error term is understood in the strong sense.
\end{itemize}
\end{thm*}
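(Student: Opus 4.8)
The plan is to follow the classical Birman–Schwinger approach adapted to the measure setting, mimicking Simon's original strategy but working in $L^2(\dR^2;d\mu)$ rather than on a regular potential. The starting point is the Birman–Schwinger principle: for $\kappa>0$, the number $-\kappa^2$ is an eigenvalue of $H_{\alpha\mu}$ if and only if $1$ is an eigenvalue of the Birman–Schwinger operator $\alpha K_\kappa$ on $L^2(\dR^2;d\mu)$, where $K_\kappa$ is the integral operator with kernel $\frac{1}{2\pi}K_0(\kappa|x-y|)$. Since $\mu$ has compact support and is in the generalized Kato class, $K_\kappa$ is a self-adjoint, positive, Hilbert–Schmidt (in fact trace-class after the logarithmic singularity is handled) operator on $L^2(\dR^2;d\mu)$, and crucially it is compact, so its spectrum is discrete away from $0$.

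The heart of the matter is the singular expansion of $K_\kappa$ as $\kappa\to 0+$. Using the known small-argument behaviour $K_0(z) = -\ln(z/2) - \gamma_{\mathrm E} + O(z^2\ln z)$, one writes
\[
K_\kappa = -\frac{\ln\kappa}{2\pi}\,\langle\,\cdot\,,\mathbbm 1\rangle_{L^2(d\mu)}\,\mathbbm 1 + L + o(1),
\]
where $\mathbbm 1$ is the constant function, $\langle\mathbbm 1,\mathbbm 1\rangle = \mu(\dR^2)$, and $L$ is the bounded operator with the $\kappa$-independent kernel $\frac{1}{2\pi}\big(\ln 2 - \gamma_{\mathrm E} - \ln|x-y|\big)$; the error $o(1)$ is in operator norm. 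Thus as $\kappa\to 0+$ the leading term is a rank-one operator blowing up like $-\ln\kappa$ along the direction $\mathbbm 1$. Projecting onto $\mathbbm 1$ and its orthogonal complement (a Feshbach/Schur-complement reduction, exactly as in \cite{Si76}), the eigenvalue equation $\alpha K_\kappa \psi = \psi$ reduces, for small $\alpha$, to a scalar equation of the form
\[
1 = \alpha\Big(-\frac{\mu(\dR^2)}{2\pi}\ln\kappa + c_\mu + o(1)\Big),
\]
where $c_\mu$ collects the contribution of $L$ restricted suitably (its precise value, together with the Macdonald constants, assembles into $C_\mu$ via the relation $\kappa = \sqrt{-\lambda}$; this is the content of formula \eqref{Cmu}). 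Solving for $\kappa$ gives $\ln\kappa = -\frac{2\pi}{\alpha\mu(\dR^2)} + (\text{bounded})$, hence $k_\alpha^2 = -\lambda(\alpha) = (C_\mu+o(1))^2\exp(-\tfrac{4\pi}{\alpha\mu(\dR^2)})$ — wait, rather $-\lambda(\alpha) = k_\alpha^2$ with $k_\alpha = (\text{const})\exp(-\tfrac{2\pi}{\alpha\mu(\dR^2)})$, giving $\lambda(\alpha) = -(C_\mu+o(1))\exp(-\tfrac{4\pi}{\alpha\mu(\dR^2)})$ after identifying $C_\mu = k_\alpha^2/\exp(-\tfrac{4\pi}{\alpha\mu(\dR^2)})$ in the limit, which proves (ii). Uniqueness and $\sharp\sigma_{\mathrm d} = 1$ in (i) follow because $\alpha K_\kappa$ has, for small $\alpha$, exactly one eigenvalue crossing $1$ — the rank-one blow-up forces precisely one branch, and the remaining eigenvalues of $\alpha K_\kappa$ stay below $1$ uniformly; monotonicity of $\kappa\mapsto K_\kappa$ (decreasing) then pins down a single solution. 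For (iii), the eigenfunction of $H_{\alpha\mu}$ is recovered from the Birman–Schwinger eigenvector $\psi_\alpha\in L^2(\dR^2;d\mu)$ by $f_\alpha = \alpha K_\kappa^{1/2}\cdots$ — more directly, $f_\alpha(x) = \frac{\alpha}{2\pi}\int K_0(k_\alpha|x-y|)\psi_\alpha(y)\,d\mu(y)$, and since $\psi_\alpha \to \mathbbm 1/\|\mathbbm 1\|$ as $\alpha\to 0+$ while $\alpha \sim \text{const}/(-\ln k_\alpha)$, this yields the stated expansion with the normalizing factor $k_\alpha/(2\pi)$ absorbing the asymptotics; the $O(1/\ln k_\alpha)$ error is exactly the rate in the scalar equation above, and the claimed nonzero finite limit of the norm of the main term is a direct computation with $\int\!\!\int K_0(k|x-y|)K_0(k|x-y'|)\,dx\,d\mu(y)d\mu(y')$ which behaves like $(-\ln k)/k^2 \cdot \text{const}$, matched by the prefactor $k_\alpha^2/(4\pi^2)$ up to the residual $\ln$.

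The main obstacle I expect is the justification of the operator-norm asymptotic expansion of $K_\kappa$ on $L^2(\dR^2;d\mu)$ — i.e.\ controlling the remainder $o(1)$ uniformly. The difficulty is that $\mu$ is only assumed to be in the generalized Kato class, so the kernel $K_0(\kappa|x-y|)$ is genuinely singular on the diagonal (logarithmically), and the standard Hilbert–Schmidt estimate $\int\!\!\int |K_0(\kappa|x-y|)|^2 d\mu(x)d\mu(y)$ need not be finite without the Kato condition doing real work; one must invoke the generalized Kato hypothesis precisely to get $K_\kappa$ bounded and the difference $K_\kappa - K_0^{\mathrm{reg}} - (\text{rank one})$ small in norm. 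This requires a careful estimate splitting the integration into a neighbourhood of the diagonal (where one uses $\mu$ of small balls $\to 0$, the Kato-class modulus of continuity) and the complement (where $|x-y|$ is bounded below and the expansion of $K_0$ is uniform). Once this uniform expansion is in hand, the Feshbach reduction and the implicit-function-theorem-type solution of the scalar equation are routine, and the constant $C_\mu$ emerges from a trace/quadratic-form computation involving $L$.
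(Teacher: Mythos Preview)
Your treatment of (i) and (ii) is essentially the paper's own argument. The paper also starts from the Birman--Schwinger principle on $L^2(\dR^2;d\mu)$, proves exactly the expansion $Q(-k^2)=-\ln(k)P+R+O(k^2\ln k)$ you write (their Proposition~2.8, with your $L$ equal to their $R$ and the Kato-class hypothesis used precisely as you anticipate in Lemma~2.6), and then extracts the scalar equation. The only cosmetic difference is that instead of a Feshbach/Schur reduction the paper packages the same computation as an abstract perturbation lemma (Theorem~2.4) for families $T(k)=T_0+\tfrac{1}{\ln k}T_1+O(\tfrac{1}{\ln^2 k})$ with $T_0$ rank one; applied to $T(k)=-\tfrac{2\pi}{\mu_{\rm T}\ln k}Q(-k^2)$ this yields the same equation $\ln k(\alpha)=-\tfrac{2\pi}{\alpha\mu_{\rm T}}+\tfrac{2\pi}{\mu_{\rm T}^2}(R\mathbbm 1,\mathbbm 1)+o(1)$ that your Feshbach step produces. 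Uniqueness and monotonicity are handled exactly as you say.

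Part (iii), however, contains a genuine computational error. You claim that
\[
\int_{\dR^2}\int_{\dR^2}\int_{\dR^2} K_0(k|x-y|)K_0(k|x-y'|)\,dx\,d\mu(y)\,d\mu(y')
\]
behaves like $(-\ln k)/k^2$. It does not: the correct asymptotic is $\sim \pi\mu_{\rm T}^2/k^2$ with \emph{no} logarithmic factor. If your claimed rate were right, the prefactor $k_\alpha^2/(4\pi^2)$ would leave a residual $|\ln k_\alpha|\to\infty$, contradicting the finite nonzero limit you are trying to establish. The paper obtains the correct rate by passing to the Fourier side: writing $R_{\mu\,dx}(-k^2)\varphi=\cF^{-1}\big(\wh\varphi(p)/(|p|^2+k^2)\big)$ with $\wh\varphi(p)=\tfrac{1}{2\pi}\int e^{-ipx}\varphi(x)\,d\mu(x)$, one gets after the substitution $p=kt$
\[
k^2\|R_{\mu\,dx}(-k^2)\varphi\|^2=\int_{\dR^2}\frac{|\wh\varphi(kt)|^2}{(|t|^2+1)^2}\,dt\;\longrightarrow\;\pi|\wh\varphi(0)|^2,
\]
using $\int_{\dR^2}(|t|^2+1)^{-2}dt=\pi$ and continuity of $\wh\varphi$ (this is their Proposition~2.12). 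With $\varphi=\mathbbm 1_\mu$ this gives the finite limit $\mu_{\rm T}^2/(4\pi)$ for $\|k_\alpha R_{\mu\,dx}(-k_\alpha^2)\mathbbm 1_\mu\|^2$, and applied to the remainder $\varsigma_\alpha=\psi_\alpha-\mathbbm 1_\mu=O(1/\ln k_\alpha)$ (which you correctly identify) it gives the $O(1/\ln k_\alpha)$ error in $L^2$. Your physical-space triple integral can be evaluated, but the answer is not what you wrote; the Fourier route is both cleaner and what the paper does.
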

%Our results for measures contain the case of non-negative regular potentials as a particular case and our assumption \eqref{a2} is weaker than the assumption \eqref{a1}. In this respect our
%results extend previously known ones also for regular potentials.

The reader may note that in the asymptotic expansion of $\lambda
(\alpha ) $ the dominating  term depends only on the total measure
of $\dR^2$ and does not depend on the distribution character of
the measure $\mu$. This stays in consistency with Simon's result
and  reflects the property that in the weak coupling regime
spectral quantities ``forget" about local properties of the
potential.

The statements  of this paper constitute the  extensions and
generalizations of the results obtained in \cite{Si76}. Firstly,
the class of perturbations that we admit  contains, for example,
singular measures as $\delta$-distributions supported on sets of
lower dimensions. Secondly, for  regular compactly supported
potentials our class is slightly larger than that of \cite{Si76}. In order
to give the reader an idea of that, let us only mention that
radially symmetric potential
\[
V(r) = \frac{\chi(r)}{r^2|\ln(r)|^{\gamma
}}\,,
\]
 with $\chi(r)$ being the characteristic function of the
interval $[0,1/2]$ and  $\gamma > 2$, is compactly supported and
belongs to the generalized Kato class, however it does not satisfy
assumptions \eqref{a1}, which are imposed in \cite{Si76}. One
should say that the formula for the constant $C_\mu$ given in
\eqref{Cmu} is derived formally by physicists \cite{Pa80} in the
case of regular potentials, but without a rigorous mathematical
proof.

Analogous asymptotic expansions of the bound state with respect to
a small parameter appear  in various spectral problems. It is
worth to mention such results for two-dimensional waveguides with
weak local perturbations \cite{BGRS97} as well as for coupled
waveguides with a small window \cite{P99} and also with a
semi-transparent window \cite{EKr01}. Recently a ``leaky
waveguide'' with a small parameter breaking the symmetry was
considered in \cite{KK13}. For the similar problems in the
one-dimensional case see~\cite{BGS77, Kl77, LL58, Si76}. The
analogous results for quantum graphs were obtained in \cite{EEK10,
E96, K07}. See also recent developments for Pauli operators
\cite{FMV11}. Our list of references is far from being complete,
however many of significant related works are mentioned.

In order to prove the main statements we will apply the Birman-Schwinger
principle. Precisely saying, we use its generalization for
potentials-measures from the generalized Kato class, which is
rigorously established in \cite{BEKS94}, see also \cite{Br95} and
\cite{P01, BLL13} for further modifications. We also use some
simple results of perturbation theory of linear operators, where
the standard reference is \cite{Kato}, however we require some
extensions of the classical results.

The paper is organized as follows. In
Section~\ref{sec-preliminaries} we complete some mathematical
tools useful for further spectral analysis. Namely, we provide a
rigorous definition of the Hamiltonian $H_{\alpha  \mu}$,
formulate the Birman-Schwinger principle, develop a perturbation
method for a particular class of non-analytic operator families
and analyze the properties of the operators involved into the
Birman-Schwinger principle. In Section~\ref{sec-weakly} we
formulate and prove  main results of the paper concerning the
uniqueness of the bound state in the weak coupling regime, obtain
its asymptotic behavior and  derive the behavior of the
corresponding eigenfunction.

In the remaining part of the paper we employ the following
abbreviations:
\begin{itemize}\setlength{\parskip}{2mm}
\item[$\bullet$] we set $L^2 := L^2(\dR^2)$ (norm $\|\cdot\|$),
$L^1 := L^1(\dR^2)$, $H^k\equiv H^k (\dR^2)$ with $k\in\dZ$ (norm $\|\cdot\|_{k}$)  and
$\mathbb{L}^2 := L^2 (\dR^2; \dC^2 ) $;
\item[$\bullet$] the notation  $\mathcal{S} := \mathcal{S}(\dR^2)$
stands for the Schwartz class, moreover we set $\mathcal{S}' :=
\mathcal{S}'(\dR^2)$ for the space dual to $\mathcal{S}$, i.e.
$\mathcal{S}'$ is the space of linear continuous functionals on
$\mathcal{S}$;
\item[$\bullet$] we set $L^2_\mu := L^2 (\dR^2 ; d\mu )$ and $L^1_\mu :=
L^1 (\dR^2 ; d\mu )$;
\item[$\bullet$] for the positive Radon measure $\mu$ on $\dR^2$ we denote $\mu_{\rm T} : =\mu(\dR^2 )$.
%\item[$\bullet$] We denote by $\mathcal{F} \colon L^2 \to L^2$ the
%Fourier transform on $\dR^2$
%\[
%\mathcal{F}f (p)= \frac{1}{2\pi } \int_{\dR^2}
%\mathrm{e}^{-ipx}f(x) dx.
%\]
\end{itemize}

\subsection*{Acknowledgements}
V.\,L. gratefully acknowledges financial support by the Austrian Science Fund (FWF), project P 25162-N26.
\section{Preliminaries} \label{sec-preliminaries}

This section plays an auxiliary role and consists of four
subsections. In Subsection~\ref{ssec:prelim1} we provide necessary
facts from \cite{BEKS94, Br95}  on self-adjoint free Laplacians
perturbed by Kato-class measures. In Subsection~\ref{ssec:prelim2}
we prove some statements on non-analytic perturbation theory,
which are hard to find in the  literature. In
Subsections~\ref{ssec:prelim3} and \ref{ssec:prelim4} we
complement known results on the operators related to
Birman-Schwinger principle.
\subsection{Self-adjoint Laplacians perturbed by Kato-class measures}
\label{ssec:prelim1} We start with  recalling the definition of
the generalized Kato class of positive Radon measures on $\dR^2$.
\begin{dfn}
\label{dfn:Kato} A positive Radon measure $\mu$ on $\dR^2$ belongs
to the generalized Kato class if
\[
\lim\limits_{\varepsilon\rightarrow
0+}\sup_{x\in\dR^2}\int_{D_\varepsilon(x)}
\big|\ln|x-y|\big|d\mu(y) = 0,
\]
where $D_\varepsilon(x)$ is the disc of radius $\varepsilon > 0$
with the center at $x\in\dR^2$.
\end{dfn}
\noindent Let $\mu$ be a positive Radon measure from the
generalized Kato class. Then for arbitrarily small $\varepsilon
>0$ there exists a constant $C(\varepsilon) > 0$ such that
\[
\int_{\dR^2}|f(x)|^2 d\mu(x) \le \varepsilon \|\nabla
f\|^2_{\mathbb{L}^2} + C(\varepsilon)\|f\|^2_{L^2}\,
\]
holds for every $f\in \cS$;
%\footnote{$\cS(\dR^2)$ is the Schwarz
%class of rapidly decaying functions on $\dR^2$.}
see \cite{BEKS94, SV96}. For the measure $\mu$ the embedding
operator $J_\mu \colon H^1 \rightarrow L^2_\mu $ is well-defined
as the closure of the natural embedding defined on the Schwartz
class, see \cite[Section 2]{BEKS94}. Consequently, the above
inequality has a natural extension, i.e.  for arbitrarily small
$\varepsilon
>0$ there exists a constant $C(\varepsilon) > 0$ such that
\begin{equation}
\label{J} \|J_\mu f\|^2_{L^2_\mu } \le \varepsilon \|\nabla
f\|^2_{\mathbb{L}^2 } + C(\varepsilon)\|f\|^2_{L^2},
\end{equation}
for all $f\in H^1$.
\begin{example}\rm{
Suppose that the measurable function $V\colon \dR^2\rightarrow
[0,+\infty)$ satisfies the condition
\[
\lim_{\varepsilon\rightarrow
0+}\sup_{x\in\dR^2}\int_{D_\varepsilon(x)}|\ln|x-y||V(y)dy = 0.
\]
Then the measure
\[
\mu_V(\Omega) := \int_{\Omega} V(x)dx
\]
belongs to the generalized Kato class.}
\end{example}
\begin{example}\cite[Example 2.3 (c)]{V09}\rm{
Given a family $\{\Gamma_i\}_{i=1}^N$ of Lipschitz curves in the
plane. Suppose that each curve in the family is parameterized by
its arc length $\gamma_i\colon [0,|\Gamma_i|]\rightarrow \dR^2$
and $\gamma_i([0,|\Gamma_i|]) = \Gamma_i$ with $i=1,2,\dots, N$.
Assume that there exist $c\in(0,1]$ such that for all $s,t\in [0,
|\Gamma_i|]$ the condition $|\gamma_i(s) - \gamma_i(t)| \ge 1/2
|s-t|$ holds with $i=1,2,\dots, N$. So that each curve can not
have cusps and can not intersect itself, whereas different curves
can intersect each other.
% and their union can have cusps.
Now let
$\Gamma := \cup_{i=1}^N \Gamma_i$. Then the Dirac measure
supported on $\Gamma$ belongs to the generalized Kato class.}
\end{example}

Let the self-adjoint operator
\[
-\Delta \,:\, \dom(-\Delta )\rightarrow  L^2\,,\qquad \dom(-\Delta
)= H^2 \,,
\]
define the unperturbed Hamiltonian of our system. In fact,
$-\Delta $ represents closed, densely defined, symmetric and
lower-semibounded
 sesquilinear form
\begin{equation}
\label{frhfee} \frt[f,g]= (\nabla f,\nabla g)_{\mathbb{L}^2},\quad
\dom\frt = H^1.
\end{equation}
Let $\mu$ be a  positive Radon measure from the generalized Kato
class. By means of $\mu$ we define the sesquilinear form
\begin{equation}
\label{frhmu} \frt_{\alpha\mu}[f,g] := \frt[f,g] - (\alpha J_\mu
f, J_\mu g)_{L^2_\mu},\quad \dom\frt_{\alpha\mu} := H^1,
\end{equation}
which, in view of  \eqref{J} and
KLMN-theorem,~cf.~\cite[Theorem~X.17]{RS2}, is symmetric, closed
and lower-semibounded.

\begin{dfn}
\label{dfn:operator} Let $H_{\alpha\mu}$ be a self-adjoint
operator acting in $L^2$  and defined as the operator associated
with $\frt_{\alpha\mu}$ via  the first representation
theorem,~\cite[Chapter VI, Theorem~2.1]{Kato}.
\end{dfn}
Denote $R(\lambda) := (-\Delta - \lambda)^{-1}$ with
$\lambda\in\dC\setminus\dR_+$. Then $R(\lambda )$ is an integral
operator with the kernel
\[
G (x,y;\lambda) = \frac{1}{2\pi} K_0(i\sqrt{\lambda}|x-y|),\quad
x,y\in\dR^2,
\]
where $K_0(\cdot)$ is the Macdonald function, see \cite[\S
9.6]{AS64}. Following the notations of \cite{BEKS94} we introduce
the integral operator
\begin{equation}
\label{Rmu} R_{\mu\, dx } (\lambda ) \colon L^2_\mu \rightarrow
L^2 \,,\quad R_{\mu\, dx }f :=
\int_{\dR^2}G(x,y;\lambda)f(y)d\mu(y),
\end{equation}
and define the ``bilateral" embedding of $R(\lambda )$ to
$L^2_\mu$ by
\begin{equation}
\label{Q} Q(\lambda) := J_\mu R_{\mu\, dx }(\lambda) \colon
L^2_\mu \rightarrow L^2_\mu \,.
\end{equation}
Note that
\begin{equation}
\label{Qkernel} Q(-k^2)f := \frac{1}{2\pi}\int_{\dR^2} K_0(k|\cdot
-y|)f(y)d\mu(y).
\end{equation}
The Birman-Schwinger principle takes the following form.
\begin{prop}\cite[Lemma 1]{Br95}, \cite{BEKS94}
\label{prop:BS} Let $R_{\mu\, dx } (\cdot)$, $Q(\cdot)$ and
$H_{\alpha\mu}$ be as above. For $\lambda \in\dR_-$ the mapping
\[
h\mapsto R_{\mu\, dx }(\lambda )h
\]
is a bijection from $\ker(I - \alpha Q(\lambda))$ onto
$\ker(H_{\alpha\mu} - \lambda)$, and
\[
\dim\ker(I - \alpha Q(\lambda)) = \dim\ker(H_{\alpha\mu} -
\lambda).
\]
\end{prop}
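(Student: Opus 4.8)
The plan is to run the classical Birman--Schwinger argument in the form-theoretic setting fixed by Definition~\ref{dfn:operator}, the only adaptations being that the perturbation acts through the bounded embedding $J_\mu$ and that the form domain of $H_{\alpha\mu}$ is $H^1$. Throughout fix $\lambda\in\dR_-$ and write $B:=(-\Delta-\lambda)^{1/2}$, so that $B$ is a bijection from $H^1=\dom B$ onto $L^2$ with $B^{-1}=R(\lambda)^{1/2}$, $B^{-2}=R(\lambda)$, and $\frt[f,g]-\lambda(f,g)=(Bf,Bg)$ for $f,g\in H^1$.

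The first step is to record the mapping properties of the operators in Proposition~\ref{prop:BS}. Applying \eqref{J} with $f=B^{-1}u$, $u\in L^2$, and using $\|\nabla B^{-1}u\|^2_{\mathbb{L}^2}+(-\lambda)\|B^{-1}u\|^2=\|u\|^2$, one obtains $\|J_\mu B^{-1}u\|^2_{L^2_\mu}\le\max\{\varepsilon,C(\varepsilon)/(-\lambda)\}\,\|u\|^2$, so $A:=J_\mu B^{-1}\colon L^2\to L^2_\mu$ is bounded. Taking adjoints and using $R(\lambda)=B^{-1}B^{-1}$ gives $R_{\mu\,dx}(\lambda)=(J_\mu R(\lambda))^*=B^{-1}A^*$, which is hence bounded from $L^2_\mu$ into $H^1=\ran B^{-1}$, and $Q(\lambda)=J_\mu R_{\mu\,dx}(\lambda)=AA^*$ is bounded, self-adjoint and non-negative on $L^2_\mu$. (That these coincide with the integral-kernel operators \eqref{Rmu}--\eqref{Qkernel} on $\dR_-$ is immediate, since $G(\cdot,\cdot;\lambda)$ is real and symmetric there; cf.\ \cite{BEKS94}.) Out of this I would extract, for $h\in L^2_\mu$ and $f:=R_{\mu\,dx}(\lambda)h\in H^1$, the two facts $Bf=A^*h$ and, for $g\in H^1$,
\[
\frt[f,g]-\lambda(f,g)=(Bf,Bg)=(A^*h,Bg)=(h,J_\mu B^{-1}Bg)_{L^2_\mu}=(h,J_\mu g)_{L^2_\mu}.
\]

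With this identity both inclusions are immediate. If $h\in\ker(I-\alpha Q(\lambda))$, set $f:=R_{\mu\,dx}(\lambda)h\in H^1$; then $\alpha J_\mu f=\alpha Q(\lambda)h=h$, so for every $g\in H^1$ one has $\frt_{\alpha\mu}[f,g]-\lambda(f,g)=(h,J_\mu g)_{L^2_\mu}-\alpha(J_\mu f,J_\mu g)_{L^2_\mu}=(h-\alpha J_\mu f,J_\mu g)_{L^2_\mu}=0$, which by the first representation theorem means $f\in\dom(H_{\alpha\mu})$ and $(H_{\alpha\mu}-\lambda)f=0$. Conversely, if $f\in\ker(H_{\alpha\mu}-\lambda)$, put $h:=\alpha J_\mu f\in L^2_\mu$; the eigenvalue equation in form sense reads $\frt[f,g]-\lambda(f,g)=(h,J_\mu g)_{L^2_\mu}$ for all $g\in H^1$, i.e.\ $(Bf,Bg)=(A^*h,Bg)$ for all $g\in H^1$, and since $Bg$ ranges over all of $L^2$ this forces $Bf=A^*h$, hence $f=B^{-1}A^*h=R_{\mu\,dx}(\lambda)h$; then $h=\alpha J_\mu f=\alpha J_\mu R_{\mu\,dx}(\lambda)h=\alpha Q(\lambda)h$, i.e.\ $h\in\ker(I-\alpha Q(\lambda))$, and $h$ is mapped to $f$. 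Injectivity of $h\mapsto R_{\mu\,dx}(\lambda)h$ on these kernels is trivial, since $R_{\mu\,dx}(\lambda)h=0$ forces $h=\alpha J_\mu R_{\mu\,dx}(\lambda)h=0$; linearity then yields $\dim\ker(I-\alpha Q(\lambda))=\dim\ker(H_{\alpha\mu}-\lambda)$.

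I expect the genuine work to lie entirely in the first step: checking that $R_{\mu\,dx}(\lambda)$ maps $L^2_\mu$ into the form domain $H^1$ rather than merely into $L^2$, that the factorization $R_{\mu\,dx}(\lambda)=B^{-1}A^*$ with $A=J_\mu B^{-1}$ is legitimate, and that the operators so defined agree with the integral-kernel definitions \eqref{Rmu}--\eqref{Qkernel} on $\dR_-$. Given the embedding bound \eqref{J} and the construction of $J_\mu$ recalled in Subsection~\ref{ssec:prelim1}, these verifications are routine and everything downstream is purely formal, which is why the statement can be attributed to \cite{Br95, BEKS94}. (For $\lambda\in\dC\setminus\dR$ one would carry $\bar\lambda$ through the adjoints, but since the proposition concerns only $\lambda\in\dR_-$ this simplification is available.)
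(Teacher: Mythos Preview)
Your argument is correct: the factorization $R_{\mu\,dx}(\lambda)=B^{-1}A^*$, $Q(\lambda)=AA^*$ with $A=J_\mu B^{-1}$ is exactly the right way to run Birman--Schwinger in the form setting, and both inclusions plus injectivity follow cleanly from it. Note, however, that the paper does not supply its own proof of this proposition at all; it simply cites \cite[Lemma~1]{Br95} and \cite{BEKS94}, where precisely this form-theoretic argument (or an equivalent resolvent-identity version) is carried out. So your write-up is not so much an alternative as a self-contained reconstruction of what the cited references contain.
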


We will also use the fact that the essential spectrum is stable
under a perturbation of a finite measure.
\begin{prop}\cite[Theorem 3.1]{BEKS94}
Let $\mu$ be a positive Radon measure on $\dR^2$ from the
generalized Kato class. Assume that
 $\mu_{\rm T} <\infty$ and $H_{\alpha\mu}$ is as
 in Definition~\ref{dfn:operator}. Then
\[
\sess(H_{\alpha\mu} )= [0,+\infty)
\]
holds.
\end{prop}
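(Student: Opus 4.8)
\smallskip
\noindent\emph{Proof proposal.}
The plan is to show that for some (hence, by the first resolvent identity, every) $\lambda\in\dR_-$ the resolvent difference $(H_{\alpha\mu}-\lambda)^{-1}-R(\lambda)$ is compact in $L^2$; since $\sess(-\Delta)=[0,+\infty)$ by the Fourier transform, Weyl's theorem on the stability of the essential spectrum under compact resolvent perturbations then yields $\sess(H_{\alpha\mu})=[0,+\infty)$. Equivalently, one may phrase this as the form perturbation $-\alpha(J_\mu\,\cdot,J_\mu\,\cdot)_{L^2_\mu}$ being relatively form-compact with respect to $\frt$.

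The technical core is the compactness of $R_{\mu\,dx}(-k^2)\colon L^2_\mu\to L^2$ and of $Q(-k^2)$ on $L^2_\mu$ for $k>0$. For $R_{\mu\,dx}(-k^2)$, Tonelli's theorem together with the explicit kernel $\tfrac1{2\pi}K_0(k|x-y|)$ and translation invariance give
\[
\int_{\dR^2}\!\!\int_{\dR^2}\big|G(x,y;-k^2)\big|^2\,d\mu(y)\,dx
=\mu_{\rm T}\int_{\dR^2}\big|G(x,0;-k^2)\big|^2\,dx
=\frac{\mu_{\rm T}}{2\pi k^2}\int_0^\infty K_0(s)^2\,s\,ds<\infty,
\]
the one-dimensional integral converging because $K_0(s)\sim-\ln s$ as $s\to0+$ and $K_0$ decays exponentially at infinity; thus $R_{\mu\,dx}(-k^2)$ is Hilbert--Schmidt, in particular compact, for every $k>0$ (only $\mu_{\rm T}<\infty$ is used here). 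For $Q(-k^2)$, whose kernel on $L^2_\mu\times L^2_\mu$ is again $\tfrac1{2\pi}K_0(k|x-y|)$, I would fix $\varepsilon>0$ and split this kernel according to $\{|x-y|\ge\varepsilon\}$ and $\{|x-y|<\varepsilon\}$. The first piece is bounded by $\tfrac1{2\pi}K_0(k\varepsilon)$ (as $K_0$ is decreasing) and hence, $\mu\otimes\mu$ being finite, defines a Hilbert--Schmidt operator. For the second piece the Schur test with weight $1$, combined with the elementary bound $K_0(t)\le C_1\big|\ln t\big|+C_2$ for small $t$, gives the operator-norm estimate
\[
\frac{1}{2\pi}\sup_{x\in\dR^2}\int_{D_\varepsilon(x)}K_0(k|x-y|)\,d\mu(y)
\le \frac{C_1}{2\pi}\sup_{x\in\dR^2}\int_{D_\varepsilon(x)}\big|\ln|x-y|\big|\,d\mu(y)+C_3\sup_{x\in\dR^2}\mu\big(D_\varepsilon(x)\big),
\]
and both terms vanish as $\varepsilon\to0+$: the first by the generalized Kato condition of Definition~\ref{dfn:Kato}, the second since $\mu(D_\varepsilon(x))\le(-\ln\varepsilon)^{-1}\int_{D_\varepsilon(x)}\big|\ln|x-y|\big|\,d\mu(y)$ for $\varepsilon<1$. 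Hence $Q(-k^2)$ is a norm-limit of Hilbert--Schmidt operators and is compact.

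To assemble the conclusion, fix $k>0$ so large that $-k^2<\inf\sigma(H_{\alpha\mu})$, possible because $\frt_{\alpha\mu}$ is lower-semibounded; then $\ker(H_{\alpha\mu}+k^2)=\{0\}$, so the Birman--Schwinger principle of Proposition~\ref{prop:BS} forces $\ker(I-\alpha Q(-k^2))=\{0\}$, and the compactness of $Q(-k^2)$ together with the Fredholm alternative makes $I-\alpha Q(-k^2)$ boundedly invertible on $L^2_\mu$. Feeding this into the Krein-type resolvent formula available in this setting (cf.~\cite{BEKS94,Br95}),
\[
(H_{\alpha\mu}+k^2)^{-1}=R(-k^2)+\alpha\,R_{\mu\,dx}(-k^2)\big(I-\alpha Q(-k^2)\big)^{-1}R_{\mu\,dx}(-k^2)^{*},
\]
the second summand is a product of bounded operators with the Hilbert--Schmidt factor $R_{\mu\,dx}(-k^2)$, hence compact, which is exactly what the first paragraph requires.

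I expect the main obstacle to be the uniform-in-$x$ estimate for the near-diagonal part of $Q(-k^2)$, i.e. the claim $\sup_{x}\int_{D_\varepsilon(x)}K_0(k|x-y|)\,d\mu(y)\to0$ as $\varepsilon\to0+$; this is precisely where the generalized Kato hypothesis is indispensable (it also rules out atoms of $\mu$). A secondary, more bookkeeping, point---imported from \cite{BEKS94}---is the rigorous justification of the resolvent formula above, in particular the identification of $R_{\mu\,dx}(-k^2)^{*}$ with the one-sided embedding $J_\mu R(-k^2)\colon L^2\to L^2_\mu$ and the verification of the identity on a core.
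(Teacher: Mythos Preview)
Your argument is correct, but note that the paper itself does \emph{not} give a proof of this proposition: it simply quotes \cite[Theorem~3.1]{BEKS94} and moves on. What you have written is in fact a faithful reconstruction of the strategy in \cite{BEKS94}: compactness of $Q(-k^2)$ (via the near/far-diagonal split and the Kato condition), the Hilbert--Schmidt property of $R_{\mu\,dx}(-k^2)$ (from $\mu_{\rm T}<\infty$ and $K_0\in L^2(\dR^2,r\,dr)$), the Krein-type resolvent identity, and then Weyl's theorem. The two points you flag as potential obstacles---the uniform Schur bound on the near-diagonal piece and the justification of the resolvent formula---are precisely the places where \cite{BEKS94} does the work; your estimates for the first are clean (in particular the observation $\mu(D_\varepsilon(x))\le(-\ln\varepsilon)^{-1}\int_{D_\varepsilon(x)}|\ln|x-y||\,d\mu(y)$ is the right way to handle the constant term), and for the second it is appropriate to cite \cite{BEKS94,Br95} rather than redo it.
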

\begin{remark}
Note that also more singular perturbations are  considered. For
example, $\delta$-interactions supported on curves in $\dR^3$, see
\cite{EK02, EK03, EK08, K12, P01},  and $\delta'$-interactions
supported on hypersurfaces, see \cite{BEL13, BLL13, EJ13}. These
perturbations do not belong to the  generalized Kato class and
therefore they require different approaches.
\end{remark}

\subsection{Elements of non-analytic perturbation theory}
\label{ssec:prelim2} Putting in mind later purposes we analyse a
family of self-adjoint operators $k\mapsto T(k) $, $k \in\dR_+$,
acting in a Hilbert space  $\cH$ and taking the form
\[
T(k) := T_0 + \tfrac{1}{\ln k}T_1 +
O\big(\tfrac{1}{\ln^2k}\big),\qquad k\rightarrow 0+,
\]
where  $T_0=\varphi (\cdot , \varphi  )$ with  $\varphi \in \cH$
being a normalized function, $T_1$ is a bounded self-adjoint
operator in $\cH$ and the error term is understood in the operator
norm  sense. The family $T(\cdot )$ is not analytic and
consequently we can not apply directly the results of
\cite[Chapters~II~and~VII]{Kato}. In the following theorem we
investigate the spectra and the eigenfunctions of $T(k)$ in the
limit $k\rightarrow 0+$.
\begin{thm}
\label{thm:perturbation} Let  $k\mapsto T(k)$ be defined as above.
For sufficiently small $k > 0$ the spectrum
$\sigma(T(k))\subset\dR$ of $T(k)$ consists of two disjoint
components $\sigma_0(k)$ and $\sigma_1(k)$.
\begin{itemize}
\item[\rm (i)]The part $\sigma_0(k)$ is located in the small neighborhood
of zero and its diameter can be estimated as
\[
{\rm diam\,}\sigma_0(k) \le \tfrac{1}{|\ln k|}\|T_1\| +
O\big(\tfrac{1}{\ln^2 k}\big),\qquad k\rightarrow 0+.
\]
\item [\rm (ii)] The part $\sigma_1(k)$ consists of exactly one eigenvalue
$\omega(k)$ of multiplicity one, which depends on $k$
continuously.
\item [\rm (iii)] The normalized eigenfunction $\varphi_k$
corresponding to the eigenvalue $\omega(k)$ has the following
expansion
\begin{equation} \label{eq-ef}
\varphi_k =\varphi + O\big(\tfrac{1}{\ln k}\big),\qquad
k\rightarrow 0+,
\end{equation}
in the norm of $\cH$.
\item [\rm (iv)] The eigenvalue $\omega(k)$ admits the asymptotics
\begin{equation}\label{eq-asymptev}
\omega(k) = 1 + \tfrac{1}{\ln k}(T_1\varphi,\varphi) +
O\big(\tfrac{1}{\ln^2k}\big),\qquad k\rightarrow 0+.
\end{equation}
\end{itemize}
\end{thm}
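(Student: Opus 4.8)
The plan is to treat $T(k)$ as a perturbation of the rank-one projection $T_0 = (\cdot,\varphi)\varphi$, whose spectrum is $\{0,1\}$ with $1$ simple. Since $\|T(k) - T_0\| = O(1/|\ln k|) \to 0$, for $k$ small enough the spectrum of $T(k)$ splits into two pieces separated by a gap: one piece $\sigma_1(k)$ in a small disc around $1$ and one piece $\sigma_0(k)$ in a small disc around $0$. Concretely, fix a circle $\Gamma_1$ of radius $1/2$ centered at $1$; for $\|T(k)-T_0\| < 1/2$ this circle avoids $\sigma(T(k))$, and the Riesz projection
\[
P(k) := -\frac{1}{2\pi\irm}\oint_{\Gamma_1}\big(T(k)-z\big)^{-1}\,\drm z
\]
is well defined, depends continuously (indeed real-analytically in the parameter $1/\ln k$) on $k$, and satisfies $\|P(k) - P_0\| < 1$ where $P_0 = T_0$ is the rank-one spectral projection of $T_0$ at $1$. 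Hence $\operatorname{rank}P(k) = \operatorname{rank}P_0 = 1$, which gives (ii): $\sigma_1(k)$ consists of a single eigenvalue $\omega(k)$ of multiplicity one, continuous in $k$. The complementary part $\sigma_0(k) = \sigma(T(k))\setminus\sigma_1(k)$ lies inside the disc of radius $\|T(k)-T_0\|$ about $0$; this already bounds its diameter by $2\|T(k)-T_0\| = O(1/|\ln k|)$, but to get the sharp constant in (i) one localizes further: on the range of $I - P(k)$ the operator $T(k)$ differs from $\frac{1}{\ln k}(I-P_0)T_1(I-P_0)$ by $O(1/\ln^2 k)$ in norm (using that $(I-P_0)T_0(I-P_0)=0$ and $\|P(k)-P_0\|=O(1/|\ln k|)$), so $\operatorname{diam}\sigma_0(k) \le \frac{1}{|\ln k|}\|(I-P_0)T_1(I-P_0)\| + O(1/\ln^2 k) \le \frac{1}{|\ln k|}\|T_1\| + O(1/\ln^2 k)$.

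For (iii) and (iv), the standard device is to work with the transported basis vector. Let $\varphi_k$ be a normalized eigenvector for $\omega(k)$, fixing its phase so that $(\varphi_k,\varphi) > 0$. Since $P(k)\varphi_k = \varphi_k$ and $P_0\varphi = \varphi$, one has $\varphi_k - \varphi = (P(k)-P_0)\varphi + P(k)(\varphi_k - P_0\varphi_k)$; a short computation using $\|P(k)-P_0\| = O(1/|\ln k|)$ and the normalization bounds $\|\varphi_k-\varphi\|$ by $O(1/|\ln k|)$, giving \eqref{eq-ef}. For the eigenvalue, write $\omega(k) = (T(k)\varphi_k,\varphi_k)$. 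Substituting $T(k) = T_0 + \frac{1}{\ln k}T_1 + O(1/\ln^2 k)$ and $\varphi_k = \varphi + O(1/\ln k)$, we get
\[
\omega(k) = (T_0\varphi,\varphi) + \tfrac{1}{\ln k}(T_1\varphi,\varphi) + O\big(\tfrac{1}{\ln^2 k}\big),
\]
and $(T_0\varphi,\varphi) = |(\varphi,\varphi)|^2 = 1$; the cross terms from $(T_0\varphi_k,\varphi_k)$ involving $\varphi_k-\varphi$ contribute only at order $1/\ln^2 k$ because $T_0\varphi=\varphi$ and $\varphi_k-\varphi \perp \varphi$ to leading order (more precisely $(\varphi_k,\varphi) = 1 + O(1/\ln^2 k)$ by normalization, which also needs to be checked). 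This yields \eqref{eq-asymptev}.

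The main technical point to get right is the interplay between the Riesz-projection estimates and the phase/normalization choices — in particular verifying that $\|\varphi_k - \varphi\|$ is genuinely $O(1/|\ln k|)$ and not merely $o(1)$, and that the $1/\ln^2 k$ error in \eqref{eq-asymptev} survives the substitution (one must track that $\varphi_k - \varphi$ has no component along $\varphi$ up to second order, so that the first-order term is exactly $\frac{1}{\ln k}(T_1\varphi,\varphi)$ with no correction). This is routine analytic perturbation theory applied to the real-analytic family $s\mapsto T_0 + sT_1 + O(s^2)$ in the parameter $s = 1/\ln k$, except that we only control the remainder in operator norm rather than as an analytic family, so the classical Kato results do not apply verbatim; hence the need to argue directly with Riesz projections and resolvent bounds rather than invoke \cite[Chapters~II~and~VII]{Kato}. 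The separation of $\sigma_0(k)$ and $\sigma_1(k)$ for small $k$, which is used implicitly throughout, follows from the gap $1 - 2\|T(k)-T_0\| \to 1 > 0$.
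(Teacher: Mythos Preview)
Your approach is essentially the paper's: perturb the spectral projections of the rank-one operator $T_0$ and read off the eigenvalue and eigenvector asymptotics from $\|P(k)-P_0\|=O(1/|\ln k|)$. The only substantive difference is that the paper outsources this projection estimate to a $\sin\Theta$-type bound of Bhatia--Davis--McIntosh and Kostrykin--Makarov--Motovilov, namely $\mathrm{dist}(\sigma_0,\sigma_1)\,\|E_0(k)E_1(0)\|\le\tfrac{\pi}{2}\|T(k)-T_0\|$, and cites Kato and Dancis--Davis for the separation and rank stability, whereas you extract the same bound directly from the Riesz contour integral. Your version is more self-contained, but the mechanism is identical; part~(iv) is carried out the same way in both.

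One slip to fix in (iii): the displayed identity $\varphi_k-\varphi=(P(k)-P_0)\varphi+P(k)(\varphi_k-P_0\varphi_k)$ is not correct as written. The clean route---which is exactly what the paper does after invoking the cited projection estimate---is: from $\varphi_k=P(k)\varphi_k$ one gets $(I-P_0)\varphi_k=(P(k)-P_0)\varphi_k$, hence $\|(I-P_0)\varphi_k\|=O(1/|\ln k|)$; normalization then yields $|(\varphi_k,\varphi)|^2=1-\|(I-P_0)\varphi_k\|^2=1+O(1/\ln^2 k)$, and with your phase convention $(\varphi_k,\varphi)>0$ this gives $\|\varphi_k-\varphi\|^2=2-2(\varphi_k,\varphi)=O(1/\ln^2 k)$, as required.
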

\begin{proof}
\noindent (i) Note that $\sigma(T_0) = \{0,1\}$ and that $\varphi$
is an eigenfunction of the operator $T_0$ corresponding to the
eigenvalue $1$. The separation of the spectra of $T(k)$ into two
parts $\sigma_0(k)$ and $\sigma_1(k)$ for sufficiently small $k
>0$ follows from \cite[Theorem V.4.10]{Kato}. The component
$\sigma_0(k)$ is located in the neighborhood of $0$ and the
component $\sigma_1(k)$ is located in the neighborhood of $1$.
Note that again by \cite[Theorem V.4.10]{Kato} the diameter of
$\sigma_0(k)$ satisfies
\[
{\rm diam\,}\sigma_0(k) \le \tfrac{1}{|\ln k|}\|T_1\| +
O\big(\tfrac{1}{\ln^2 k}\big),\qquad k\rightarrow 0+.
\]

\noindent (ii) Let $E_i(k)$, $i=0,1$, be the orthogonal projectors
onto the spectral subspaces of the operator $T(k)$ corresponding to $\sigma_i(k)$.  Then
$E_0(0) = I - T_0$ and $E_1(0) =T_0$ hold. Since $\|T(k)-T_0\|$
tends to $0$ for $k\to 0+$, relying on \cite[Theorem 3]{DD87} we
have $\dim\ran E_1(k) = 1$ for sufficiently small $k
>0$. Therefore $E_1(k) = \wt\varphi_k(\cdot,\wt\varphi_k)$, where
$\wt\varphi_k$ is the normalized eigenfunction corresponding to
the eigenvalue $\omega(k)$ of $T(k)$ with multiplicity one.
According to \cite[Theorem VIII.1.14]{Kato} the eigenvalue
$\omega(k)$ depends on $k$ continuously.

\noindent (iii) By \cite[Proposition 2.1]{KMM07}, see also
\cite{BDM83}, the estimate
\[
{\rm dist}(\sigma_0(k),\sigma_1(k))\|E_0(k)E_1(0)\| \le
\frac{\pi}{2}\|T(k) - T(0)\|
\]
holds, which  yields  the asymptotic property
\[
\| E_1(0) - E_1(k)E_1(0)\| = O\Big(\tfrac{1}{\ln k}\Big),\qquad k
\rightarrow 0+,
\]
where we have used $E_0(k) = I - E_1(k)$. The above expansion implies  the following
\begin{equation}
\label{estimate1} \|\varphi -\wt\varphi_k(\varphi,\wt\varphi_k)\|
= O\Big(\tfrac{1}{\ln k}\Big),\qquad k \rightarrow 0+.
\end{equation}
A straightforward calculation yields
\[
\begin{split}
\|\varphi -\wt\varphi_k(\varphi,\wt\varphi_k)\|^2 &=\big(\varphi -\wt\varphi_k(\varphi,\wt\varphi_k),\varphi -\wt\varphi_k(\varphi,\wt\varphi_k)\big)\\
& = 1 - (\wt\varphi_k,\varphi)(\varphi,\wt\varphi_k) - (\varphi,\wt\varphi_k)\ov{(\varphi,\wt\varphi_k)} + |(\varphi,\wt\varphi_k)|^2\\
&\qquad\qquad\qquad\qquad\qquad\qquad\qquad\qquad =  1 -
|(\wt\varphi_k,\varphi)|^2\,.
\end{split}
\]
Combining the above result with the estimate \eqref{estimate1} we
arrive at
\begin{equation}
\label{estimate1.5} 1 - |(\wt\varphi_k,\varphi)|^2 =
O\Big(\tfrac{1}{\ln^2 k}\Big),\qquad k \rightarrow 0+.
\end{equation}
%which will be improved later.
Consequently, we obtain
\begin{equation}
\label{estimate2} 1 - |(\wt\varphi_k,\varphi)| =
O\big(\tfrac{1}{\ln^2 k}\big),\qquad k \rightarrow 0+.
\end{equation}
Suppose that $(r(k), \theta(k))$ determine the polar
representation of $(\wt\varphi_k,\varphi)$, i.e.
$(\wt\varphi_k,\varphi) = r(k)e^{i\theta(k)}$.
%holds\footnote{Clearly, any complex number can be represented in
%such a form.}.
According to \eqref{estimate2} we claim that
\begin{equation}
\label{rk} r(k) = 1 + O\big(\tfrac{1}{\ln^2 k}\big),\qquad k
\rightarrow 0+.
\end{equation}
Since $\wt\varphi_k$ is the normalized eigenfunction of $T(k)$
corresponding to the eigenvalue $\omega(k)$ the function
\begin{equation}
\label{rotation} \varphi_k := e^{i\theta(k)}\wt\varphi_k
\end{equation}
 is as well.  Thence, by \eqref{estimate1} and \eqref{rk} we get
\begin{equation*}
\label{eigenfunction}
\begin{split}
\|\varphi - \varphi_k\| &= \|\varphi -
e^{i\theta(k)}\wt\varphi_k\| \le \|\varphi -
r(k)e^{i\theta(k)}\wt\varphi_k\|
+ \|r(k)e^{i\theta(k)}\wt\varphi_k - e^{i\theta(k)}\wt\varphi_k\|\\
& = \|\varphi - (\varphi,\wt\varphi_k)\wt\varphi_k\| + |r(k)-1| =
O\big(\tfrac{1}{\ln k}\big),\qquad k \rightarrow 0+,
\end{split}
\end{equation*}
which proves the expansion \eqref{eq-ef}.

\noindent (iv) Moreover, $\omega(k)\in \sigma_1(k)$ as an
eigenvalue of $T(k)$ with multiplicity one admits the
representation
\[
\omega(k) = \Big( T(k)\varphi_k, \varphi_k  \Big)= \Big(T_0
\varphi_k, \varphi_k  \Big) +\tfrac{1}{\ln k} \Big (T_1 \varphi_k,
\varphi_k  \Big)
 + O\Big(\tfrac{1}{\ln^2
k}\Big),\qquad k \rightarrow 0+.
\]
Applying \eqref{eq-ef} and the fact that $T_1$ is bounded, we get
\[
\omega(k)= |(\varphi,\varphi_k)|^2 + \tfrac{1}{\ln
k}(T_1\varphi,\varphi) + O\big(\tfrac{1}{\ln^2k}\big),\qquad k
\rightarrow 0+.
\]
Using \eqref{estimate1.5} and \eqref{rotation} we get the
asymptotics of $\omega
(\cdot) $ given in \eqref{eq-asymptev}.% given by %the expansion
%\[
%\omega(k) = 1 + \tfrac{1}{\ln k}(T_1\varphi,\varphi) + O\big(\tfrac{1}{\ln^2k}\big),\quad k \rightarrow 0+,
%\]
%which proves the expansion for the eigenvalue.
\end{proof}
%
%Besides general results we provide a computation
%for the Dirac measure supported by a circle of certain radius and relate
%our results with explicit computation by Exner and Tater \cite{ET04}.

\subsection{Properties of the $Q(\cdot)$-function}
\label{ssec:prelim3} In this subsection we analyze the
operator-valued function $Q(\cdot)$ defined in  \eqref{Q}. Our aim
is to describe certain basic properties of $Q(\cdot)$ and to
derive its asymptotic expansion in the neighborhood of zero. The
following lemma provides the first auxiliary tool.
\begin{lem}
\label{lem:bounded} Let $\mu$ be a compactly supported positive
finite Radon measure on $\dR^2$ belonging to the generalized Kato
class and $C\in\dR$ be a constant. Then the integral operator
acting as
\[
R f := \int_{\dR^2}\big(-\ln|\cdot-y| +C\big)f(y)d\mu(y)
\]
is bounded in $L^2_\mu$.
\end{lem}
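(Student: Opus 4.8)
The plan is to show that the integral kernel $k(x,y) := -\ln|x-y| + C$ generates a bounded operator on $L^2_\mu$ by splitting it into a singular part near the diagonal and a bounded part away from it, and then invoking the generalized Kato condition of Definition~\ref{dfn:Kato} to control the singular part via a Schur-test argument. First I would fix $\varepsilon > 0$ small and write $k(x,y) = k(x,y)\mathbbm{1}_{|x-y|<\varepsilon} + k(x,y)\mathbbm{1}_{|x-y|\ge\varepsilon}$, inducing a decomposition $R = R_\varepsilon^{\rm near} + R_\varepsilon^{\rm far}$. Since $\mu$ is compactly supported, say $\mathrm{supp}\,\mu\subset K$ with $\mathrm{diam}\,K =: d$, on the far piece we have $\varepsilon \le |x-y|\le d$ for $x,y\in K$, so $|k(x,y)|\le |\ln\varepsilon| + |\ln d| + |C| =: M_\varepsilon$ is bounded; because $\mu_{\rm T} = \mu(\dR^2)<\infty$, the far piece is Hilbert--Schmidt on $L^2_\mu$, with $\|R_\varepsilon^{\rm far}\|^2 \le \iint |k(x,y)|^2\,d\mu(x)\,d\mu(y)\le M_\varepsilon^2\,\mu_{\rm T}^2 < \infty$.

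For the near piece I would apply the Schur test: an integral operator with kernel $\kappa$ on $L^2_\mu$ has norm at most $\bigl(\sup_x \int|\kappa(x,y)|d\mu(y)\bigr)^{1/2}\bigl(\sup_y\int|\kappa(x,y)|d\mu(x)\bigr)^{1/2}$. Here $\kappa(x,y) = (-\ln|x-y|+C)\mathbbm{1}_{|x-y|<\varepsilon}$, and by symmetry of $|x-y|$ both suprema coincide. Now $\int_{D_\varepsilon(x)}|{-\ln|x-y|+C}|\,d\mu(y) \le \int_{D_\varepsilon(x)}\bigl|\ln|x-y|\bigr|\,d\mu(y) + |C|\,\mu(D_\varepsilon(x))$. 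The first summand tends to $0$ uniformly in $x$ as $\varepsilon\to 0+$ by the generalized Kato hypothesis, and the second is bounded by $|C|\,\mu_{\rm T}$. Hence for a suitable fixed $\varepsilon$ the row sum is finite and uniform in $x$, so $R_\varepsilon^{\rm near}$ is bounded on $L^2_\mu$. Adding the two contributions, $R = R_\varepsilon^{\rm near} + R_\varepsilon^{\rm far}$ is bounded.

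The only mild subtlety, rather than a genuine obstacle, is that the Schur test as usually stated is for $L^2$ of a $\sigma$-finite measure space; one should note that a finite Radon measure is in particular $\sigma$-finite, and the bilinear-form version $|(Rf,g)_{L^2_\mu}| \le \iint |\kappa(x,y)||f(y)||g(x)|\,d\mu(y)\,d\mu(x)$ followed by Cauchy--Schwarz after the standard symmetrization $|f(y)||g(x)| = \bigl(|f(y)|\,|g(x)|^{?}\bigr)\cdots$ — i.e. inserting $1 = \kappa^{1/2}\kappa^{-1/2}$ where $\kappa>0$, or more simply using $2|f(y)||g(x)|\le |f(y)|^2 + |g(x)|^2$ on the symmetric kernel — gives the bound directly with the uniform row sum as the constant. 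One also checks that the near kernel is $\ge 0$ for $\varepsilon$ small (since $-\ln|x-y|\to+\infty$), so measurability and positivity cause no trouble. This establishes that $R$ is a bounded operator in $L^2_\mu$.
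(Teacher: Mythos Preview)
Your proposal is correct and follows essentially the same approach as the paper: both arguments use the generalized Kato condition together with compactness of $\mathrm{supp}\,\mu$ to control the logarithmic singularity, and then invoke the Schur test for the symmetric kernel. The only organizational difference is that the paper first splits off the constant $C$ as a rank-one operator $C\mathbbm{1}_\mu(\cdot,\mathbbm{1}_\mu)_{L^2_\mu}$ and then bounds the full row integral $\int_{\dR^2}\bigl|\ln|x-y|\bigr|\,d\mu(y)$ directly (via the same near/far split you use), applying Schur once to the whole log kernel; you instead keep $C$ in the kernel and split the operator itself into near and far pieces, treating the far piece as Hilbert--Schmidt. Both routes are equivalent in substance.
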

\begin{proof}
The operator $R$ can be decomposed into the sum of two integral
operators:
\[
R_1f = \int_{\dR^2}\big(-\ln|\cdot-y|\big)f(y)d\mu(y)\,,\qquad
R_2f :=C\int_{\dR^2}f(y)d\mu(y)\,.
\]
According to the definition of the generalized Kato class
(Definition~\ref{dfn:Kato}) for any constant $A > 0$ one can find
$\varepsilon >0$ such that for every $x_0\in{\rm supp}\,\mu$ the
estimate
\begin{equation}
\int_{D_\varepsilon(x_0)}\big|\ln|x_0-y|\big|d\mu(y) \le A
\end{equation}
holds. Hence for any $x_0\in{\rm supp}\,\mu$ we get
\[
\begin{split}
&\int_{\dR^2}\big|\ln|x_0-y|\big|d\mu(y) \\
&\qquad = \int_{D_\varepsilon(x_0)}\big|\ln|x_0-y|\big|d\mu(y)+\int_{\dR^2\setminus\ov{D_\varepsilon(x_0)}}\big|\ln|x_0-y|\big|d\mu(y)\\
&\qquad\qquad\qquad\qquad\qquad\qquad\qquad\le A +
\max\big\{\big|\ln|\varepsilon|\big|,\big|\ln|{\rm diam}\,{\rm
supp}\,\mu|\big|\big\}\mu_{\rm T}.
\end{split}
\]
Note that the bound above is independent of the choice of $x_0$
and therefore by the Schur criterion \cite[Lemma 0.32]{Teschl} and
the symmetry of  the integral kernel the operator $R_1$ is
bounded. Let ${\mathbbm 1}_\mu$ stand for the identity function
from $L^2_\mu$. Note that the integral operator $R_2$ is a
rank-one operator $C{\mathbbm 1}_\mu(\cdot,{\mathbbm
1}_\mu)_{L^2_\mu}$. Consequently,  $R_2$ is also bounded. Now
boundedness of $R$ follows from decomposition $R= R_1 +R_2$ and
boundedness of $R_1$ and $R_2$ separately.
\end{proof}
After these preliminaries we are ready to analyze the
operator-valued function $\dR_+ \ni k\mapsto Q(-k^2)$. First, let
us note that for a given $k$ the operator $Q(-k^2)$ is bounded in
$L^2_\mu$. The proof of this fact can be done via repeating the
argument from \cite[Corollary~2.2]{BEKS94}. Now our aim is to
expand $Q(\cdot )$ in a neighbourhood of zero.
\begin{prop}
\label{prop:expansion} \label{prop-decomposition} Let $\mu$ be a
compactly supported positive Radon measure on $\dR^2$ from the
generalized Kato class,  and the operator-valued function
$Q(\cdot)$ be defined as in \eqref{Q}. Then $Q(\cdot)$ admits the
expansion
\begin{equation}\label{eq-Qdecomposition}
Q(-k^2) = -\ln(k)P + R + O(k^2\ln(k)),\qquad ~k\rightarrow 0+\,,
\end{equation}
in the operator norm, where  $P$ is a rank-one operator given by
\begin{equation}
\label{P} P  := \frac{1}{2\pi}{\mathbbm 1}_\mu\big(\cdot,{\mathbbm
1}_\mu\big)_{L^2_\mu}
\end{equation}
and $R$ is a bounded operator in $L^2 _\mu$ defined by
\begin{equation}
\label{R} R f :=\frac{1}{2\pi}\int_{\dR^2}\Big(-\ln\big|\cdot -
y\big| + C_{\rm E}+ \ln 2\Big)f(y)d\mu(y)\,;
\end{equation}
$C_{\rm E}$ stands for the Euler-Mascheroni constant\footnote{This
constant can be computed as $C_{\rm E} =
\lim\limits_{n\rightarrow\infty}\Big(\sum_{k=1}^n\tfrac{1}{k}
-\ln(n)\Big)$.}, i.e.~$C_{\rm E} = 0.57721...$.
\end{prop}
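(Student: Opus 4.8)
The plan is to read off \eqref{eq-Qdecomposition} directly from the small-argument behaviour of the Macdonald function in the kernel \eqref{Qkernel} of $Q(-k^2)$, and then to control the resulting error kernel by a Schur-type bound, in the same spirit as the proof of Lemma~\ref{lem:bounded}. Compact support of $\mu$ will be used to make all estimates uniform in the support variables.

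First I would recall from \cite[\S 9.6]{AS64} the asymptotics
\[
K_0(z) = -\ln\tfrac{z}{2} - C_{\rm E} + O\big(z^2|\ln z|\big),\qquad z\to 0+,
\]
and set $\rho(z):=K_0(z)+\ln\tfrac{z}{2}+C_{\rm E}$, so that $|\rho(z)|\le c_0\,z^2|\ln z|$ on a fixed interval $(0,z_0]$ with $z_0\in(0,1)$, for some $c_0>0$; we may in addition assume $z_0\le e^{-1/2}$, so that $z\mapsto z^2|\ln z|$ is monotone increasing on $(0,z_0]$. Substituting $z=k|x-y|$ and separating the $k$-dependent logarithm, the kernel $\tfrac{1}{2\pi}K_0(k|x-y|)$ of $Q(-k^2)$ splits, pointwise for $x\neq y$, into $-\tfrac{1}{2\pi}\ln k$, plus the $k$-independent logarithmic kernel appearing in \eqref{R}, plus the remainder $\tfrac{1}{2\pi}\rho(k|x-y|)$. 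The first summand is the integral kernel of $-\ln(k)\,P$ with $P$ as in \eqref{P}; the second is the kernel of the operator $R$ from \eqref{R}, which is bounded in $L^2_\mu$ by Lemma~\ref{lem:bounded}. Thus \eqref{eq-Qdecomposition} reduces to the norm estimate $\|S(k)\|_{L^2_\mu\to L^2_\mu}=O(k^2\ln k)$ for the integral operator $S(k)$ with kernel $\tfrac{1}{2\pi}\rho(k|x-y|)$.

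To bound $S(k)$, put $D:={\rm diam}\,{\rm supp}\,\mu<\infty$. Then $k|x-y|\le kD\le z_0$ for all $x,y\in{\rm supp}\,\mu$ and all sufficiently small $k>0$, and by monotonicity of $z\mapsto z^2|\ln z|$ on $(0,z_0]$ we obtain
\[
\big|\rho(k|x-y|)\big|\le c_0\,(k|x-y|)^2\big|\ln(k|x-y|)\big|\le c_0\,(kD)^2\big|\ln(kD)\big|,
\]
uniformly in $x,y\in{\rm supp}\,\mu$; note that the apparent blow-up at $x=y$ is harmless because $z^2|\ln z|\to0$ as $z\to0+$. Since this bound is independent of $x,y$, the Schur criterion \cite[Lemma 0.32]{Teschl} together with $\mu_{\rm T}<\infty$, applied exactly as in Lemma~\ref{lem:bounded}, gives $\|S(k)\|\le\tfrac{c_0}{2\pi}(kD)^2|\ln(kD)|\,\mu_{\rm T}=O(k^2\ln k)$, which is \eqref{eq-Qdecomposition}.

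The only genuinely delicate point is the uniformity of this remainder estimate: the $K_0$-expansion has to be used with a single remainder constant over the whole range $z\in(0,kD]$, and it is precisely compactness of ${\rm supp}\,\mu$ that forces $kD\to0$ and hence keeps us inside the regime where that expansion is valid. Without compact support $k|x-y|$ would not be uniformly small and \eqref{eq-Qdecomposition} would break down. Everything else is a routine combination of the $K_0$-asymptotics, the boundedness of $R$ from Lemma~\ref{lem:bounded}, and the Schur test.
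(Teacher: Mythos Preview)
Your proof is correct and follows essentially the same route as the paper's: both insert the small-argument expansion of $K_0$ into the kernel \eqref{Qkernel}, split off the $-\ln(k)P$ and $R$ pieces, and then bound the remainder operator $S(k)$ in operator norm via a Schur-type estimate, using compactness of ${\rm supp}\,\mu$ to get a uniform pointwise bound $|\rho(k|x-y|)|\le C k^2|\ln k|$. Your write-up is somewhat more detailed (explicit monotonicity of $z\mapsto z^2|\ln z|$, the diameter $D$, etc.), but the argument is the same as the paper's.
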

\begin{proof}
%I THINK WE DO NOT NEED TO WRITE IT; IT IS CLEAR THAT P IN ONE-RANK
%Let ${\mathbbm 1}_\mu$ be the identity function in $L^2(\dR^2;d\mu)$.
%First, observe that
%\[
% \ran P = \span\{{\mathbbm 1}_\mu\}
%\]
%and, therefore, $P$ is rank-one.
To prove the statement we employ the following expansion of the
Macdonald function % \margipar{????}
\begin{equation}
K_0(x) = -\ln(x/2) + C_{\rm E} + s(x),\qquad x\rightarrow 0+\,,
\end{equation}
where  $s(x) = O(x^2\ln(x))$, see \cite[Equation 9.6.13]{AS64}. In
view of \eqref{Qkernel} and the compactness of the support of
$\mu$ the operator $Q(-k^2)$ can be expanded into the sum of the
rank-one operator $-\ln(k)P$, the operator $R$ and  the remaining
operator $S(k)$ with the integral kernel $s(k|x-y|)$. Since
$Q(-k^2)$, $P$ and $R$ are bounded the operator $S(k)$ is bounded
as well.  Further, note that for sufficiently small $k > 0$
\[
\big|s(k|x-y|)\big|\le A_\mu k^2|\ln(k)|,\qquad x,y\in{\rm supp}\,\mu,
\]
with some constant $A_\mu > 0$, which depends on $\mu$. Thus by Schur criterion the operator $S(k)$ in $L^2_\mu$
with the integral kernel $s(k|x-y|)$ satisfies
\[
\|S(k)\| = O(k^2\ln k),\qquad k\rightarrow 0+,
\]
which completes  the proof.
\end{proof}
\begin{remark}
Similar decomposition of the function $Q(\cdot)$ is employed  in
\cite{CK11} for some other purposes in the case of Dirac measure
supported by a non-compact curve.
\end{remark}
In the next lemma we gather some useful properties of the
operator-valued function $Q(\cdot)$.
\begin{lem}
\label{lem:Q} Let the operator-valued function $Q(\cdot)$ be
defined as in \eqref{Q}. Then the following statements hold.
\begin{itemize}\setlength{\parskip}{1.2mm}
\item [\rm (i)]  $Q(-k^2) \ge 0$ for all $k > 0$.
\item [\rm (ii)] $Q(-k_1^2)\le Q(-k_2^2)$ for $k_1 \ge k_2$.
\item [\rm (iii)] For any $\varepsilon >0$ there exists sufficiently small $k > 0$
such that the spectrum $\sigma(Q(-k^2))$ decomposes into two
disjoint parts
\[
\sigma_0(Q(-k^2))\subset (0,\|R\| +\varepsilon)
\]
with $R$ as in \eqref{R} and
\[
\sigma_1(Q(-k^2)) = \{\gamma(k)\},
\]
where $\gamma(k)$ is the eigenvalue of $Q(-k^2)$ with multiplicity
one.
\item [\rm (iv)] The function $\gamma(\cdot)$ is continuous, strictly decaying, and $\gamma(k)\rightarrow +\infty$ as $k\rightarrow 0+$.
\end{itemize}
\end{lem}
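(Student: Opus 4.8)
The plan is to obtain parts (i)--(ii) directly from the explicit kernel in \eqref{Qkernel}, and to reduce parts (iii)--(iv) to Theorem~\ref{thm:perturbation} by feeding it the expansion \eqref{eq-Qdecomposition}. Throughout I assume $\mu_{\rm T}>0$ (otherwise $Q(\cdot)\equiv 0$ and nothing of substance remains). I would first record, writing $J_\mu^*\colon L^2_\mu\to H^{-1}$ for the adjoint of the embedding $J_\mu$ from \eqref{J} (so $J_\mu^*g=g\,d\mu$, and, comparing kernels, $R_{\mu\,dx}(-k^2)=R(-k^2)J_\mu^*$), the factorisation $Q(-k^2)=J_\mu R(-k^2)J_\mu^*$ for $k>0$. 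Since $R(-k^2)=(-\Delta+k^2)^{-1}$ is a strictly positive bounded operator on $L^2$, this already gives $Q(-k^2)\ge0$, i.e.\ (i); equivalently, $\tfrac1{2\pi}K_0(k|\cdot|)$ is a continuous integrable positive-definite function and, for $f\in L^2_\mu$ with finite measure $\nu:=f\,d\mu$,
\[
(Q(-k^2)f,f)_{L^2_\mu}=\int_{\dR^2}\int_{\dR^2}\tfrac1{2\pi}K_0(k|x-y|)\,d\nu(x)\,\ov{d\nu(y)}=\tfrac1{(2\pi)^2}\int_{\dR^2}\frac{|\wh\nu(\xi)|^2}{|\xi|^2+k^2}\,d\xi\ \ge\ 0
\]
by Plancherel's theorem (the Fubini interchange being harmless as $\nu$ is finite and compactly supported). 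For (ii) the last expression is manifestly non-increasing in $k$ for each fixed $f$; equivalently $R(-k_2^2)\ge R(-k_1^2)$ for $k_1\ge k_2$, so $Q(-k_1^2)\le Q(-k_2^2)$ by the factorisation.

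For (iii) the plan is to rescale so as to apply Theorem~\ref{thm:perturbation}. Set $\psi:=\mu_{\rm T}^{-1/2}{\mathbbm 1}_\mu$, a unit vector of $L^2_\mu$, so that $P=\tfrac{\mu_{\rm T}}{2\pi}\,\psi(\cdot,\psi)_{L^2_\mu}$ by \eqref{P}. Using $-\ln k=|\ln k|$ and $k^2\ln k=O(1/\ln^2k)$ as $k\to0+$, Proposition~\ref{prop:expansion} yields, in the operator norm,
\[
T(k):=\frac{2\pi}{\mu_{\rm T}\,|\ln k|}\,Q(-k^2)=\psi(\cdot,\psi)_{L^2_\mu}+\frac1{\ln k}\Big(-\tfrac{2\pi}{\mu_{\rm T}}R\Big)+O\big(\tfrac1{\ln^2k}\big),\qquad k\to0+,
\]
which is exactly the form required in Theorem~\ref{thm:perturbation}, with $\varphi=\psi$, $T_0=\psi(\cdot,\psi)_{L^2_\mu}$ and $T_1=-\tfrac{2\pi}{\mu_{\rm T}}R$ (bounded self-adjoint by Lemma~\ref{lem:bounded} and the symmetry of its kernel). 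That theorem then splits $\sigma(T(k))$, for small $k$, into a component near $0$ of diameter at most $\tfrac{2\pi\|R\|}{\mu_{\rm T}|\ln k|}+O(\tfrac1{\ln^2k})$ and a simple eigenvalue $\omega(k)$ near $1$. Multiplying back by $\tfrac{\mu_{\rm T}|\ln k|}{2\pi}$ gives $\sigma(Q(-k^2))=\sigma_0(Q(-k^2))\cup\{\gamma(k)\}$ with $\gamma(k):=\tfrac{\mu_{\rm T}|\ln k|}{2\pi}\,\omega(k)$ an eigenvalue of multiplicity one and ${\rm diam}\,\sigma_0(Q(-k^2))\le\|R\|+O(1/|\ln k|)$; together with $\sigma(Q(-k^2))\subset[0,\infty)$ from (i) this confines $\sigma_0(Q(-k^2))$ to $[0,\|R\|+\varepsilon)$ for all sufficiently small $k$, as asserted.

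For (iv), continuity of $\gamma(\cdot)$ and $\gamma(k)\to+\infty$ as $k\to0+$ are immediate from Theorem~\ref{thm:perturbation}(ii),(iv): $\omega(\cdot)$ is continuous and $\omega(k)=1+O(1/|\ln k|)$, so $\gamma(k)=\tfrac{\mu_{\rm T}}{2\pi}|\ln k|\,\omega(k)$ is continuous and diverges; in particular $\gamma(k)=\max\sigma(Q(-k^2))$ for small $k$, since the remainder of the spectrum stays below $\|R\|+\varepsilon$. For the strict decay I would argue variationally: fix $0<k_2<k_1$ small and take a normalised eigenfunction $f_1$ of $Q(-k_1^2)$ with $\gamma(k_1)=(Q(-k_1^2)f_1,f_1)_{L^2_\mu}$. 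As $\cS$ is dense in $L^2_\mu$, $J_\mu^*$ is injective, so $\nu_1:=J_\mu^*f_1=f_1\,d\mu\neq0$ in $H^{-1}$, and the factorisation, the resolvent identity, and the commutativity of the resolvents of $-\Delta$ give
\[
(Q(-k_2^2)f_1,f_1)_{L^2_\mu}-(Q(-k_1^2)f_1,f_1)_{L^2_\mu}=(k_1^2-k_2^2)\big\|(-\Delta+k_1^2)^{-1/2}(-\Delta+k_2^2)^{-1/2}\nu_1\big\|_{L^2}^2>0,
\]
strictly positive because $(-\Delta+k_1^2)^{-1/2}(-\Delta+k_2^2)^{-1/2}$ is injective and $\nu_1\neq0$. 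Hence $\gamma(k_1)<(Q(-k_2^2)f_1,f_1)_{L^2_\mu}\le\max\sigma(Q(-k_2^2))=\gamma(k_2)$, which is the required strict monotonicity.

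I do not anticipate a genuine obstacle: the substantive difficulty --- handling a non-analytic operator family governed by $1/\ln k$, isolating one large simple eigenvalue and controlling the small remainder --- is exactly what Theorem~\ref{thm:perturbation} was built for, and the expansion \eqref{eq-Qdecomposition} is already available. The only thing requiring care is the scaling bookkeeping in (iii), so that the diameter bound from Theorem~\ref{thm:perturbation} lands at the finite value $\|R\|$ after being multiplied by $|\ln k|$, rather than being lost.
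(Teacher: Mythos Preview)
Your proposal is correct. Parts (i)--(iii) are essentially the paper's own argument: for (iii) you rescale $Q(-k^2)$ by $\tfrac{2\pi}{\mu_{\rm T}|\ln k|}$ and feed the expansion of Proposition~\ref{prop:expansion} into Theorem~\ref{thm:perturbation}, exactly as the paper does; for (i)--(ii) the paper appeals directly to the positivity and monotone decay of the Macdonald function in the kernel \eqref{Qkernel}, while you use the factorisation $Q(-k^2)=J_\mu R(-k^2)J_\mu^*$ and the Fourier side --- a cosmetic difference only.

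The one genuine divergence is in (iv), the \emph{strict} decay of $\gamma(\cdot)$. The paper argues by contradiction through the Birman--Schwinger principle (Proposition~\ref{prop:BS}): if $\gamma(k_1)=\gamma(k_2)=c$ for $k_1<k_2$, then continuity and the weak monotonicity from (ii) force $\gamma\equiv c$ on $[k_1,k_2]$, whence every $-k^2$ with $k\in[k_1,k_2]$ is an eigenvalue of $H_{(1/c)\mu}$, contradicting countability of the point spectrum. Your route is direct and self-contained: with $f_1$ the top eigenfunction at $k_1$ and $\nu_1=J_\mu^*f_1\ne 0$ (using density of $\cS$ in $L^2_\mu$ to get injectivity of $J_\mu^*$), the resolvent identity gives a strictly positive gain $(Q(-k_2^2)f_1,f_1)-(Q(-k_1^2)f_1,f_1)>0$, and then $\gamma(k_1)<\gamma(k_2)$ follows from the variational characterisation $\gamma(k)=\max\sigma(Q(-k^2))$. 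Your argument avoids invoking Proposition~\ref{prop:BS} altogether and gives strict monotonicity pointwise rather than via an interval-of-constancy contradiction; the paper's argument is shorter once Birman--Schwinger is already in hand. Both are valid.
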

\begin{proof}
The item (i) follows directly from the  non-negativity of the
Macdonald function and  the representation of the integral kernel
of $Q(-k^2)$ given by \eqref{Qkernel}.

 The Macdonald function is
monotonously decaying function of its argument, which yields the
statement of (ii).

Note that according to Proposition~\ref{prop:expansion} the
function $ k\mapsto T(k)$, $k>0$ defined by
\[
T(-k^2) := -\frac{2\pi}{\mu_{\rm T}\ln k}Q(-k^2)
\]
determines  a realization of the operator family considered in
Theorem~\ref{thm:perturbation} with $\cH = L^2_\mu$, $\varphi =
\tfrac{\mathbbm 1_\mu}{\sqrt{\mu_{\rm T}}}$ and $T_1 =
-\tfrac{2\pi}{\mu_{\rm T}}R$ with $R$ as in \eqref{R}. Thus for
sufficiently small $k > 0$ the spectrum of the operator $Q(-k^2)$
can be separated into two parts as claimed in (iii) and the
function $\gamma(\cdot)$ is continuous. In view of (ii) the
function $\gamma(\cdot)$ is non-increasing. Suppose that for some
$k_1 < k_2$ the condition $\gamma(k_1) = \gamma(k_2)$ holds, that
implies $\gamma(k) = c >0$ for $k\in[k_1,k_2]$. Hence, by
Proposition~\ref{prop:BS} we have
$[-k_1^2,-k_2^2]\subset\sigma_{\rm p} (H_{(1/c)\mu})$, which is a
contradiction, because the point spectrum of any
self-adjoint operator should be a countable set. This proves
strict decay of $\gamma(\cdot)$.
\end{proof}

\subsection{Properties of the $R_{\mu\,dx}(\cdot)$-function}
\label{ssec:prelim4} In this subsection we investigate some
properties of the operator-valued function $R_{\mu\,dx}(\cdot )$
defined by \eqref{Rmu}. The unitary Fourier transform
$\mathcal{F}\,:\, L^2 \to L^2$ is defined as the extension
by continuity of the integral transform
\[
(\cF f)(p) := \frac{1}{2\pi}\int_{\dR^2}e^{-ipx}f(x)dx,\qquad f\in L^2\cap L^1.
\]
It is well-known that $\cF$ can be further extended by continuity up to
the space $\mathcal{S}'$, cf.~\cite[Chapter~1.1.7]{AH91}. Without
a danger of confusion we keep the same notation $\mathcal{F}\,:\,
\mathcal{S}'\to \mathcal{S}'$ for this extension. In the following
we will use also the abbreviation $\mathcal{F}f=  \wh f$, $f\in
\mathcal{S}'$.
Applying again the standard results concerning the
Sobolev spaces, see~\cite[Chapter~1.2.6]{AH91}, we can write
\begin{equation}\label{eq-Sobolev}
H^k =\{f \in \mathcal{S}' \,:\, \wh f (p){(p^2+1)^{k/2}} \in L^2
\}\,,
\end{equation}
where  the norm $\|\cdot \|_k$ in $H^k$ is defined by $\|f\|_k =
\| \wh f (p){(p^2+1)^{k/2}} \|$.
We define the functional $\varphi\mu$ for $\varphi\in L^2_\mu$ as
\[
(\varphi\mu)(f) := \int_{\dR^2} (J_\mu f)(x)\ov{\varphi(x)}d\mu(x),\qquad f\in H^1,
\]
with $J_\mu$ as in Subsection~\ref{ssec:prelim1}.
Let us show that $\varphi\mu\in H^{-1}$. Indeed for any $f\in H^1$ we get
\[
|(\varphi\mu)(f)| \le
\int_{\dR^2}|(J_\mu f)(x)||\varphi(x)|d\mu(x) \le
\|J_\mu f\|_{L^2_\mu}\|\varphi\|_{L^2_\mu} \le
C\|f\|_1\|\varphi\|_{L^2_\mu}
\]
with some constant $C >0 $, where we applied H\"older inequality in between and used that the embedding $J_\mu$ of $H^1$ into $L^2_\mu$ is continuous. We have shown that the functional $\varphi\mu$ is continuous on $H^1$
and hence $\varphi\mu\in H^{-1}$.
 Further, we define
\begin{equation}
\label{whvarphi} \wh\varphi(p) := (\cF(\varphi\mu))(p) = \frac{1}{2 \pi}
\int_{\dR^2}e^{-ipx}\varphi(x)d\mu(x),\qquad
\varphi \in L^2_\mu.
\end{equation}
In the next lemma we explore basic properties of the above
transform.
\begin{lem}
\label{lem:whphi} Let $\mu$ be a compactly supported positive
finite Radon measure on $\dR^2$ from the generalized Kato class.
Then for any $\varphi\in L^2_\mu$ its Fourier transform $\wh
\varphi$ given by \eqref{whvarphi} is a  bounded and Lipschitz
continuous function.
\end{lem}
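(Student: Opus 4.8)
The plan is to establish both properties directly from the defining formula
\[
\wh\varphi(p) = \frac{1}{2\pi}\int_{\dR^2} e^{-ipx}\varphi(x)\,d\mu(x),
\]
using only that $\mu$ is a finite measure with compact support and that $\varphi\in L^2_\mu$. First I would prove boundedness. Since $|e^{-ipx}| = 1$ for all $p,x\in\dR^2$, a direct estimate combined with the Cauchy–Schwarz inequality in $L^2_\mu$ gives
\[
|\wh\varphi(p)| \le \frac{1}{2\pi}\int_{\dR^2}|\varphi(x)|\,d\mu(x) \le \frac{1}{2\pi}\|\varphi\|_{L^2_\mu}\,\mu_{\rm T}^{1/2},
\]
where finiteness of $\mu_{\rm T} = \mu(\dR^2)$ is used; note that $\varphi\in L^1_\mu$ automatically since $\mu$ is finite. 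This bound is uniform in $p$, so $\wh\varphi$ is bounded.

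Next I would establish Lipschitz continuity. For $p_1,p_2\in\dR^2$ write
\[
\wh\varphi(p_1) - \wh\varphi(p_2) = \frac{1}{2\pi}\int_{\dR^2}\big(e^{-ip_1 x} - e^{-ip_2 x}\big)\varphi(x)\,d\mu(x),
\]
and use the elementary estimate $|e^{-ip_1 x} - e^{-ip_2 x}| \le |p_1 - p_2|\,|x|$, valid by the mean value theorem applied to $t\mapsto e^{-itx}$. Since $\mu$ is compactly supported, there is a radius $\rho_\mu>0$ with $|x|\le\rho_\mu$ for $\mu$-a.e.\ $x$, hence
\[
|\wh\varphi(p_1) - \wh\varphi(p_2)| \le \frac{\rho_\mu}{2\pi}\,|p_1-p_2|\int_{\dR^2}|\varphi(x)|\,d\mu(x) \le \frac{\rho_\mu\,\mu_{\rm T}^{1/2}}{2\pi}\,\|\varphi\|_{L^2_\mu}\,|p_1-p_2|,
\]
again applying Cauchy–Schwarz in the last step. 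This is precisely a Lipschitz bound with constant proportional to $\|\varphi\|_{L^2_\mu}$, and it completes the proof.

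There is no real obstacle here: the only two ingredients are the compactness of $\mathrm{supp}\,\mu$ (which controls $|x|$ and lets us pass from pointwise continuity of the integrand to a global Lipschitz modulus) and the finiteness of $\mu$ (which places $\varphi\in L^1_\mu$ and supplies the factor $\mu_{\rm T}^{1/2}$). The mildest point to be careful about is justifying that one may estimate under the integral sign, which is immediate from the dominated convergence theorem with dominating function $2|\varphi|\in L^1_\mu$; I would mention this only in passing.
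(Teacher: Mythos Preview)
Your proposal is correct and follows essentially the same approach as the paper: boundedness via the trivial estimate $|e^{-ipx}|=1$ together with $L^2_\mu\subset L^1_\mu$, and Lipschitz continuity via $|e^{-ip_1x}-e^{-ip_2x}|\le C|x|\,|p_1-p_2|$ combined with the compactness of $\mathrm{supp}\,\mu$. The only cosmetic difference is that you carry the Cauchy--Schwarz step through to make the constants explicit in terms of $\|\varphi\|_{L^2_\mu}$, whereas the paper stops at $\|\varphi\|_{L^1_\mu}$.
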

\begin{proof}
Let $\varphi \in L^2_\mu $. Since the measure $\mu$ is finite the
inclusion $L^2_\mu\subset L^1_\mu$ holds. The boundedness of
$\wh\varphi$ follows from the estimate
\[
\|\wh\varphi\|_{L^\infty } \le
\tfrac{1}{2\pi}\|\varphi\|_{L^1_\mu} <\infty\,.
\]
It remains to show that $\wh\varphi$ is Lipschitz continuous. Let
us choose arbitrary $p_1,p_2\in\dR^2$. Applying (\ref{whvarphi})
we obtain
\begin{equation}
\label{Lip1} |\wh\varphi(p_1) -\wh\varphi(p_2)|\le
\frac{1}{2\pi} \int_{\dR^2}|e^{-ip_1x} -
e^{-ip_2x}|\cdot|\varphi(x)|d\mu(x).
\end{equation}
Using the fact that the function $\dR\ni t\mapsto e^{it}$ is
Lipschitz continuous we estimate
\begin{equation}
\label{Lip2} |e^{-ip_1x} - e^{-ip_2x}| = |1 - e^{-i(p_2 - p_1)x}|
\le L|x||p_2 -p_1|
\end{equation}
with some constant $L > 0$. Plugging \eqref{Lip2} into
\eqref{Lip1} and using compactness of $\mu$ we get
\[
|\wh\varphi(p_1) -\wh\varphi(p_2)|\le L'|p_2-p_1|
\]
with some constant $L' > 0$.
\end{proof}

%Putting in mind further discussion we
\begin{remark}\label{re-extension}
{\rm Using the representation (\ref{eq-Sobolev}) of the Sobolev
spaces we can extend operator $R(-k^2)$ to a larger  space. To
derive this extension we apply
\begin{equation}\label{eq-RFour}
  R(-k^2) = \mathcal{F}^{-1}\frac{1}{|p|^2+k^2}\mathcal{F}\,:\,L^2 \to L^2\,,
\end{equation}
cf.~\cite{AH91}. Operator  $\frac{1}{|p|^2+k^2}\mathcal{F}$ is
bounded as the map acting from $H^{-1}$ to $L^2$ and, consequently,
it can be extended by continuity to the whole space $H^{-1}$. This
means that $R(-k^2)$ admits the analogous extension. Note
that $R_{\mu \, dx}(-k^2)\varphi$ with $\varphi\in L^2_\mu$ can be identified with the extension of $R(-k^2)$ defined above
applied to $\varphi \mu \in H^{-1}$.
 }
\end{remark}

In the next lemma we provide the Fourier representation of
$R_{\mu\, dx}(-k^2)$.
\begin{lem}
\label{lem:Rmu} Let $\mu$ be a compactly supported positive finite
Radon measure on $\dR^2$ from the generalized Kato class. The operator
$R_{\mu\, dx}(-k^2)\colon L^2_\mu\rightarrow L^2$ defined by
\eqref{Rmu} admits the representation
\begin{equation}\label{eq-Fourier}
R_{\mu\,dx}(-k^2)\varphi= \cF^{-1}\frac{\wh
\varphi(p)}{|p|^2+k^2}\,, \qquad \varphi\in L^2_\mu\,,
\end{equation}
where $\wh \varphi$ is given by \eqref{whvarphi} and $\cF^{-1}$ is
the inverse Fourier transform on $\dR^2$.
\end{lem}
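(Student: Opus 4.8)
The plan is to use Remark~\ref{re-extension}, which already identifies $R_{\mu\,dx}(-k^2)\varphi$ with the extension of $R(-k^2)$ applied to the distribution $\varphi\mu\in H^{-1}$. First I would recall from \eqref{eq-RFour} that on $L^2$ one has $R(-k^2)=\cF^{-1}\tfrac{1}{|p|^2+k^2}\cF$, and that the operator $g\mapsto \cF^{-1}\big(\tfrac{\wh g(p)}{|p|^2+k^2}\big)$, viewed as a map from $H^{-1}$ to $L^2$, is bounded: indeed $\tfrac{1}{|p|^2+k^2}\le \tfrac{C_k}{(|p|^2+1)^{1/2}}\cdot\tfrac{1}{(|p|^2+1)^{1/2}}$ componentwise, so multiplication by $\tfrac{1}{|p|^2+k^2}$ maps $\{(|p|^2+1)^{-1/2}L^2\}=\cF H^{-1}$ continuously into $L^2$. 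Hence this bounded operator is the continuous extension of $R(-k^2)$ from $L^2$ to $H^{-1}$ referred to in Remark~\ref{re-extension}.

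Next I would identify $\cF(\varphi\mu)$ explicitly. By the definition of $\varphi\mu\in H^{-1}$ as the functional $f\mapsto \int_{\dR^2}(J_\mu f)(x)\ov{\varphi(x)}\,d\mu(x)$, and the definition \eqref{whvarphi} of $\wh\varphi$, one checks by testing against Schwartz functions that $\cF(\varphi\mu)=\wh\varphi$, where the right-hand side is the bounded, Lipschitz continuous function supplied by Lemma~\ref{lem:whphi}. Concretely, for $f\in\cS$ the embedding $J_\mu f$ is just the restriction $f|_{\mathrm{supp}\,\mu}$, so $(\varphi\mu)(f)=\int f(x)\ov{\varphi(x)}\,d\mu(x)$, and pairing $\cF(\varphi\mu)$ with $g\in\cS$ and using Fubini (justified by finiteness of $\mu$ and $\varphi\in L^1_\mu$) gives exactly $\int \wh\varphi(p)\,g(p)\,dp$ in the appropriate conjugation convention. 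Since $\wh\varphi\in L^\infty\subset\cS'$, this determines $\cF(\varphi\mu)=\wh\varphi$ as a tempered distribution, and because $\wh\varphi$ is bounded it lies in $\cF H^{-1}$ (as $(|p|^2+1)^{-1/2}\wh\varphi\in L^2$ by boundedness and the fact that $(|p|^2+1)^{-1/2}\in L^2(\dR^2)$).

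Finally I would assemble the pieces: applying the extended operator from Remark~\ref{re-extension} to $\varphi\mu\in H^{-1}$ gives
\[
R_{\mu\,dx}(-k^2)\varphi = \cF^{-1}\Big(\tfrac{1}{|p|^2+k^2}\,\cF(\varphi\mu)\Big) = \cF^{-1}\Big(\tfrac{\wh\varphi(p)}{|p|^2+k^2}\Big),
\]
which is precisely \eqref{eq-Fourier}. The only point requiring genuine care—the main (mild) obstacle—is the bookkeeping identifying $\cF(\varphi\mu)$ with the integral formula \eqref{whvarphi} and verifying that $\varphi\mu$ lands in $H^{-1}$ in a way compatible with the Fourier-multiplier extension; once the distributional pairing with Schwartz test functions is written out, the equality is routine, and the rest is a direct substitution into \eqref{eq-RFour}.
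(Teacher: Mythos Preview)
Your approach is exactly the paper's: invoke Remark~\ref{re-extension} to identify $R_{\mu\,dx}(-k^2)\varphi$ with the $H^{-1}$-extension of $R(-k^2)$ applied to $\varphi\mu$, then plug in the definition \eqref{whvarphi} of $\wh\varphi=\cF(\varphi\mu)$. The paper's proof is literally one sentence to this effect.

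One small slip: your parenthetical justification that $\wh\varphi\in\cF H^{-1}$ ``by boundedness and the fact that $(|p|^2+1)^{-1/2}\in L^2(\dR^2)$'' is wrong---in two dimensions $\int_{\dR^2}(|p|^2+1)^{-1}\,dp$ diverges. You do not need this argument anyway: the inclusion $\varphi\mu\in H^{-1}$ is already established in the paper (just before \eqref{whvarphi}) via the continuity of $J_\mu\colon H^1\to L^2_\mu$, so $\cF(\varphi\mu)\in\cF H^{-1}$ automatically. Delete that clause and the proof stands.
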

\begin{proof} Combining the statements of
Remark~\ref{re-extension} and (\ref{whvarphi}) we get the claim.
\end{proof}

Having in mind later purpose we investigate in the next
proposition  the properties of $R_{\mu\,dx}(-k^2)$ as
$k\rightarrow 0+$.
\begin{prop}
\label{prop:Rmu} Let $\mu$ be a compactly supported positive
finite Radon measure on $\dR^2$ from the generalized Kato class.
Let the operator-valued function $R_{\mu\, dx}(-k^2)\colon
L^2_\mu\rightarrow L^2$ be as in \eqref{Rmu}. Then for any
$\varphi \in L^2_\mu$ the following asymptotics holds
\[
k^2\|R_{\mu\,dx}(-k^2)\varphi\|^2_{L^2} = \pi|\wh\varphi(0)|^2 +
O(\sqrt{k}),\qquad k\rightarrow 0+,
\]
where $\wh\varphi$ is the transform of $\varphi$ defined by
\eqref{whvarphi}.
\end{prop}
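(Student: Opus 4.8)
\emph{Plan.} The idea is to pass to the Fourier side via Lemma~\ref{lem:Rmu} and reduce the statement to an elementary scalar integral estimate. By the Fourier representation \eqref{eq-Fourier} and the Plancherel identity,
\[
\|R_{\mu\,dx}(-k^2)\varphi\|^2_{L^2} = \int_{\dR^2}\frac{|\wh\varphi(p)|^2}{(|p|^2+k^2)^2}\,dp,
\]
where $\wh\varphi$ is the bounded and Lipschitz continuous function from \eqref{whvarphi}. The Plancherel step is legitimate because the integrand is dominated by $\|\wh\varphi\|_{L^\infty}^2(|p|^2+k^2)^{-2}\in L^1(\dR^2)$ for every $k>0$, by the boundedness part of Lemma~\ref{lem:whphi}.

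I would then split $|\wh\varphi(p)|^2 = |\wh\varphi(0)|^2 + g(p)$ with $g(p):=|\wh\varphi(p)|^2-|\wh\varphi(0)|^2$, and treat the two contributions separately. The leading term is computed exactly in polar coordinates: since $\int_{\dR^2}(|p|^2+k^2)^{-2}\,dp = \pi k^{-2}$, one gets $k^2|\wh\varphi(0)|^2\int_{\dR^2}(|p|^2+k^2)^{-2}\,dp = \pi|\wh\varphi(0)|^2$, which is precisely the claimed main term.

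For the remainder I would use both properties in Lemma~\ref{lem:whphi} at once: global boundedness gives $|g(p)|\le 2\|\wh\varphi\|_{L^\infty}^2$, while Lipschitz continuity of $\wh\varphi$ yields Lipschitz continuity of $|\wh\varphi|^2$ (writing $|\wh\varphi(p)|^2-|\wh\varphi(q)|^2=(|\wh\varphi(p)|-|\wh\varphi(q)|)(|\wh\varphi(p)|+|\wh\varphi(q)|)$ and bounding the second factor by $2\|\wh\varphi\|_{L^\infty}$), so that $|g(p)|\le M|p|$ for some $M>0$. Splitting the integral into $\{|p|\le 1\}$ and $\{|p|>1\}$, using $M|p|$ on the former and $2\|\wh\varphi\|_{L^\infty}^2$ on the latter, and invoking the elementary identities $\int_0^\infty \frac{r^2\,dr}{(r^2+k^2)^2}=\frac{\pi}{4k}$ and $\int_1^\infty \frac{r\,dr}{(r^2+k^2)^2}\le\frac{1}{2}$, one obtains
\[
k^2\Big|\int_{\dR^2}\frac{g(p)}{(|p|^2+k^2)^2}\,dp\Big| \le \tfrac{\pi^2 M}{2}\,k + 2\pi\|\wh\varphi\|_{L^\infty}^2\,k^2 = O(k),\qquad k\rightarrow 0+,
\]
which is in particular $O(\sqrt k)$, as required.

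There is no genuine obstacle here; the argument is a routine split-and-scale estimate. The two points needing a little care are: (a) justifying the Plancherel step, i.e. checking $\wh\varphi(p)/(|p|^2+k^2)\in L^2(\dR^2)$; and (b) choosing the cut-off radius independent of $k$ — a $k$-dependent choice such as $|p|=\sqrt k$ would spoil the estimate — so that the local Lipschitz bound and the global $L^\infty$ bound on $g$ can be combined cleanly. I would also note that the computation in fact delivers the sharper error $O(k)$, but the weaker bound $O(\sqrt k)$ recorded in the statement is all that is used later.
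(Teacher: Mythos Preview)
Your proof is correct and in fact delivers the sharper remainder $O(k)$. The paper's argument starts the same way (Fourier representation via Lemma~\ref{lem:Rmu} and Plancherel), but then proceeds differently: it rescales $p=kt$ to obtain $\int_{\dR^2}|\wh\varphi(kt)|^2(|t|^2+1)^{-2}\,dt$, splits this integral at the $k$-dependent radius $|t|=1/\sqrt{k}$, bounds the outer piece by $O(k)$ using only $\|\wh\varphi\|_{L^\infty}$, and on the inner piece applies the integral mean value theorem to pull out a factor $|\wh\varphi(\theta)|^2$ with $|\theta|\le\sqrt{k}$; the Lipschitz bound then gives $|\wh\varphi(\theta)|^2=|\wh\varphi(0)|^2+O(\sqrt{k})$, which is where the $\sqrt{k}$ in the statement comes from.

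Your route is more direct: subtracting the constant $|\wh\varphi(0)|^2$ from $|\wh\varphi(p)|^2$ at the outset and using the Lipschitz estimate pointwise avoids the mean value step and the rescaling, and the fixed cut-off at $|p|=1$ keeps the two regimes cleanly separated. The payoff is the improved $O(k)$; the cost is nil. One small correction to your commentary: the paper \emph{does} use a $k$-dependent cut-off (at $|p|=\sqrt{k}$ in the original variable) and it does not spoil the estimate, it merely produces the weaker $O(\sqrt{k})$; so your parenthetical warning is a bit overstated, though it does not affect your argument.
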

\begin{proof}
Let $\varphi\in L^2_\mu$ and $k >0$. Using Lemma~\ref{lem:Rmu} and
applying the fact that $\cF^{-1}$ is unitary in $L^2$ we obtain
\begin{equation} \label{eq-norm0}%\nonumber %\label{eq-FourierG}
k^2\|R_{\mu\, dx } (-k^2)\varphi \|^2
  =k^2\int_{\dR^2}
   \frac{|\widehat{\varphi}(p)|^2}{(|p|^2
   +k^2)^2}dp = \int_{\dR^2}\frac{|\widehat{\varphi}(kt)|^2}{(|t|^2
   +1)^2}dt\,.
\end{equation}
For given $\varepsilon >0$ we disjoin the last integral in
\eqref{eq-norm0} onto regions
\[
\mathcal{B}_{k}=\{t\in\dR^2\colon |t|<\tfrac {1}{\sqrt{k}}
\}\quad\text{and}\quad\mathcal{B}^{\rm c}_{k}=\dR^2 \setminus
\ov{\mathcal{B}_{k}}.
\]
Using boundedness of $\wh\varphi$ we obtain that
\begin{equation}\label{eq-norm2}
\begin{split}
\int_{\mathcal{B}^c_{ k} }
 \frac{|\widehat{\varphi}(k t)|^2}{(|t|^2 +1 )^2}
  dt& \le C\int_{\mathcal{B}^c_{ k} }
 \frac{1}{(|t|^2 +1 )^2}dt \\
 &= C'\int_{\frac{1}{\sqrt{k}}}^{+\infty}\frac{r}{(r^2+1)^2}dr  = O(k),\qquad k\rightarrow 0+.
 \end{split}
\end{equation}
Using boundedness and continuity of $\wh\varphi$, and applying mean-value theorem we arrive at
\begin{equation}\label{eq-norm-a}
\int_{\mathcal{B}_{k} }
 \frac{|\widehat{\varphi}(kt)|^2}
 {(|t|^2 +1)^2}dt =
 |\wh\varphi(\theta)|^2
 \int_{\mathcal{B}_{k} }
 \frac{dt} {(|t|^2 +1)^2},
\end{equation}
where $\theta\in\dR^2$ and $|\theta| \le \sqrt{k}$.
Applying the asymptotic behaviour
\[
\int_{\mathcal{B}_{k} }
 \frac{dt}
 {(|t|^2 +1)^2} =
 \int_{\dR^2}\frac{dt}{(|t|^2+1)^2} + O(k)
  = \pi + O(k),\qquad k\rightarrow 0+,
\]
to the formula \eqref{eq-norm-a} we obtain
\[
\int_{\mathcal{B}_{k} }
 \frac{|\widehat{\varphi}(kt)|^2}
 {(|t|^2 +1)^2}dt = \pi|\wh\varphi(\theta)|^2 + O(k),\qquad k\rightarrow 0+.
\]
Lipschitz continuity of $\wh\varphi$
combined with the above formula, \eqref{eq-norm0}, \eqref{eq-norm2} and $|\theta|\le \sqrt{k}$ imply that
\[
k^2 \|R_{\mu\, dx } (-k^2)\varphi  \|^2_{L^2} =
\pi|\wh\varphi(0)|^2 + O(\sqrt{k}),\qquad k\rightarrow 0+,
\]
and the claim is proven.
\end{proof}

\section{Weakly coupled bound state}
\label{sec-weakly}

%\section{Weakly coupled bound state}
In Subsection~\ref{ssec:main1} we show that for sufficiently small
coupling constant $\alpha
>0$ the discrete spectrum of
the self-adjoint operator $H_{\alpha\mu}$ consists of exactly one
negative eigenvalue of multiplicity one and we compute the
asymptotics of this eigenvalue as $\alpha\rightarrow 0+$.
Moreover, in Subsection~\ref{ssec:main2} we compute the
asymptotics of the corresponding eigenfunction in the same limit.

\subsection{Asymptotics of weakly coupled bound state}
\label{ssec:main1} In this subsection we compute the asymptotcs of
weakly coupled bound state. The  technique we employ here is
slightly different than the one applied in \cite{Si76}. As a
benefit it  allows to include also regular potentials with
stronger singularities.
% Let the
%self-adjoint operator $H_{\alpha\mu}$ be as in
%Definition~\ref{dfn:operator}.
%In this subsection we use
%Birman-Schwinger principle to show that for sufficiently small
%coupling constant $\alpha >0$
%\[
%\sharp\sigma_{\rm d}(H_{\alpha\mu}) = 1
%\]
%holds, and we compute the asymptotics of the corresponding unique bound state as $\alpha\rightarrow 0+$.
\begin{thm}
\label{thm:existence} Let $\mu$ be a compactly supported positive
finite Radon measure on $\dR^2$ from the generalized Kato class.
Let the self-adjoint operator $H_{\alpha\mu}$ be as in
Definition~\ref{dfn:operator}. Then for all sufficiently small
$\alpha >0$ the condition
\[
\sharp\sigma_{\rm d}(H_{\alpha\mu}) = 1
\]
holds and the corresponding unique eigenvalue $\lambda(\alpha) <
0$ satisfies
\begin{equation}\label{eq-limitev}
\lambda(\alpha) \rightarrow 0-\quad \mathrm{for} \quad
\alpha\rightarrow 0+.
\end{equation}
\end{thm}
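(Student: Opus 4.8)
The plan is to run the Birman--Schwinger principle (Proposition~\ref{prop:BS}) together with the spectral decomposition of $Q(-k^2)$ furnished by Lemma~\ref{lem:Q}. Recall that for $\lambda = -k^2 < 0$ one has $\dim\ker(H_{\alpha\mu}+k^2) = \dim\ker(I - \alpha Q(-k^2))$, so $-k^2$ is an eigenvalue of $H_{\alpha\mu}$ precisely when $1/\alpha \in \sigma(Q(-k^2))$. By Lemma~\ref{lem:Q}(iii)--(iv), for all sufficiently small $k>0$ the spectrum of $Q(-k^2)$ splits as $\sigma_0(Q(-k^2)) \cup \{\gamma(k)\}$, where $\sigma_0(Q(-k^2)) \subset (0,\|R\|+\varepsilon)$ and $\gamma(k)$ is a simple eigenvalue with $\gamma(k) \to +\infty$ as $k\to 0+$, and $\gamma(\cdot)$ is continuous and strictly decreasing.

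First I would fix the threshold. Since $\gamma$ is continuous, strictly decreasing and blows up at $0+$, there is some $k_0>0$ small enough that $\gamma$ restricted to $(0,k_0]$ is a strictly decreasing continuous bijection onto $[\gamma(k_0),+\infty)$, and also small enough that the conclusion of Lemma~\ref{lem:Q}(iii) holds on $(0,k_0]$ with, say, $\varepsilon = 1$; set $\alpha_0 := 1/(\|R\|+2)$ and also require $\alpha_0 < 1/\gamma(k_0)$. Then for every $\alpha \in (0,\alpha_0)$ the value $1/\alpha$ exceeds $\|R\|+2 > \sup\sigma_0(Q(-k^2))$ for all $k \le k_0$, so the only way $1/\alpha$ can lie in $\sigma(Q(-k^2))$ is via the branch $\gamma$; and since $1/\alpha > \gamma(k_0)$, there is a \emph{unique} $k_\alpha \in (0,k_0)$ with $\gamma(k_\alpha) = 1/\alpha$, namely $k_\alpha = \gamma^{-1}(1/\alpha)$. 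This produces exactly one eigenvalue $\lambda(\alpha) = -k_\alpha^2$ of $H_{\alpha\mu}$ below zero, and it is simple because $\gamma(k_\alpha)$ is a simple eigenvalue of $Q(-k_\alpha^2)$ and the Birman--Schwinger correspondence preserves multiplicity.

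Next I must rule out eigenvalues that the small-$k$ analysis does not see, i.e.\ eigenvalues $-k^2$ with $k \ge k_0$. Here I would use monotonicity: by Lemma~\ref{lem:Q}(ii), $Q(-k^2) \le Q(-k_0^2)$ for $k \ge k_0$, hence $\|Q(-k^2)\| \le \|Q(-k_0^2)\| =: M$ for all such $k$ (using $Q \ge 0$ from Lemma~\ref{lem:Q}(i)); shrinking $\alpha_0$ further so that $1/\alpha_0 > M$ guarantees $1/\alpha \notin \sigma(Q(-k^2))$ for any $k \ge k_0$, so no eigenvalue of $H_{\alpha\mu}$ lies in $(-\infty,-k_0^2]$. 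Combined with $\sigma_{\rm ess}(H_{\alpha\mu}) = [0,+\infty)$, this shows $\sigma_{\rm d}(H_{\alpha\mu}) = \{\lambda(\alpha)\}$ with $\sharp\sigma_{\rm d}(H_{\alpha\mu}) = 1$. Finally, for the limit \eqref{eq-limitev}: as $\alpha \to 0+$ we have $1/\alpha \to +\infty$, and since $k_\alpha = \gamma^{-1}(1/\alpha)$ with $\gamma^{-1}$ continuous and $\gamma(k) \to +\infty$ exactly as $k \to 0+$, it follows that $k_\alpha \to 0+$, hence $\lambda(\alpha) = -k_\alpha^2 \to 0-$.

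The only genuinely delicate point is the bookkeeping that packages Lemma~\ref{lem:Q}(iii)--(iv) into a clean statement about $\gamma$ being a bijection on a fixed interval $(0,k_0]$ — one must be careful that the constant $\varepsilon$ (hence the location of $\sigma_0$) and the threshold $k_0$ are chosen once and for all before $\alpha$ is sent to zero, and that the separation of $1/\alpha$ from $\sigma_0(Q(-k^2))$ holds \emph{uniformly} in $k$ over the relevant range. Everything else is a direct application of the Birman--Schwinger principle and the monotonicity in Lemma~\ref{lem:Q}.
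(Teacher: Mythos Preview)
Your argument is correct and follows the same Birman--Schwinger strategy as the paper, invoking Lemma~\ref{lem:Q}\,(iii)--(iv) to solve $\gamma(k)=1/\alpha$ uniquely for small $\alpha$ and then reading off $\lambda(\alpha)=-k_\alpha^2\to 0-$. You are in fact more careful than the paper's own proof: the paper does not explicitly rule out eigenvalues with $k\ge k_0$, whereas your monotonicity step via Lemma~\ref{lem:Q}\,(i)--(ii) closes that gap.
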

\begin{proof}
We rely on the Birman-Schwinger principle from
Proposition~\ref{prop:BS}. In order to recover the eigenvalues of
$H_{\alpha \mu }$ we will investigate the following condition
$1\in\sigma_{\rm p}(\alpha Q(-k^2))$. Let $\sigma_i(Q(-k^2))$,
$i=0,1$ be as in Lemma~\ref{lem:Q}\,(iii). The possibility
$1/\alpha\in\sigma_{ 0}(Q(-k^2))$ for $k > 0$ small enough is
excluded due to Lemma~\ref{lem:Q}\,(iii). On the other hand,
$1/\alpha\in\sigma_{1}(Q(-k^2))$ is equivalent to the equation
\[
\gamma(k) = 1/\alpha,
\]
which in view of Lemma~\ref{lem:Q}\,(iv) has exactly one solution
$k(\alpha )$ for $\alpha >0 $ small enough and moreover $k(\cdot
)$ satisfies
\[
k(\alpha) \rightarrow 0+, \qquad \alpha\rightarrow 0+.
\]
Consequently, $\lambda(\alpha) = -k(\alpha)^2$ gives the unique
negative simple eigenvalue of $H_{\alpha\mu}$ and the limiting
property \eqref{eq-limitev} holds.
%It remains to show that there are no other eigenvalues for sufficiently small $\alpha >0$.
%Note that for any other $k\ne k(\alpha)$ the condition $1\in\sigma_{\rm p}(\alpha Q(-k^2))$ can not hold. Indeed for $ 0 < k  < k(\alpha)$ we conclude
%$1\notin \sigma_0(\alpha Q(-k^2))$ because of Lemma~\ref{lem:Q}\,(iii) and
%$1/\alpha\ne \gamma(k)$ because of Lemma~\ref{lem:Q}\,(iv). Therefore $1\notin\sigma(\alpha
%Q(-k^2))$ and $-k^2\in\rho(H_{\alpha\mu})$. Suppose that $k > k(\alpha)$,
%according to Lemma~\ref{lem:Q} we conclude that $\|\alpha Q(-k^2)\| < 1$.
%Hence $1\notin\sigma(\alpha Q(-k^2))$ and $-k^2\in\rho(H_{\alpha\mu})$.
\end{proof}
%The last  statement shows that for the weak coupling case the
%operator $H_{\alpha \mu}$ has the unique eigenvalue $\lambda
%(\alpha )$.
Our next aim is to derive asymptotics of $\lambda
(\alpha )$  for $\alpha \rightarrow 0+$.

\begin{thm}
\label{thm:main} Let $\mu$ be a compactly supported positive
finite Radon measure on $\dR^2$ from the generalized Kato class,
and let $H_{\alpha\mu}$ be the self-adjoint operator as in
Definition~\ref{dfn:operator}. Then the eigenvalue of $H_{\alpha
\mu}$ admits the following asymptotics
\begin{equation}\label{eq-evexp}
\lambda(\alpha) = -\big(C_\mu + o(1)\big)
\exp\Big(-\tfrac{4\pi}{\alpha \mu_{\rm T}}\Big)\,,\quad
\alpha\rightarrow 0+\,,
\end{equation}
with
\begin{equation}
\label{Cmu} C_\mu = \exp\Big(\frac{4\pi}{\mu_{\rm T}^2} (R
{\mathbbm 1}_\mu, {\mathbbm  1}_\mu)_{L^2_\mu }\Big),
\end{equation}
where $R$ is defined in \eqref{R}.
\end{thm}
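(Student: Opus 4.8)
The plan is to obtain \eqref{eq-evexp} by combining the Birman--Schwinger reduction already carried out in Theorem~\ref{thm:existence} with the abstract expansion of Theorem~\ref{thm:perturbation} and the decomposition of $Q(\cdot)$ in Proposition~\ref{prop:expansion}. Recall from the proof of Theorem~\ref{thm:existence} that for small $\alpha$ one has $\sharp\sigma_{\rm d}(H_{\alpha\mu}) = 1$ and $\lambda(\alpha) = -k(\alpha)^2$, where $k(\alpha)$ is the unique small solution of $\gamma(k) = 1/\alpha$, $\gamma(k)$ being the simple eigenvalue forming $\sigma_1(Q(-k^2))$, and $k(\alpha)\to 0+$ as $\alpha\to 0+$. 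Hence the whole problem reduces to an asymptotic formula for $\gamma(k)$ as $k\to 0+$ together with an inversion of the relation $\gamma(k(\alpha)) = 1/\alpha$.

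First I would pin down $\gamma(k)$. As noted in the proof of Lemma~\ref{lem:Q}, Proposition~\ref{prop:expansion} shows that the family $T(-k^2) := -\tfrac{2\pi}{\mu_{\rm T}\ln k}Q(-k^2)$ is of the type treated in Theorem~\ref{thm:perturbation} with $\cH = L^2_\mu$, $\varphi = {\mathbbm 1}_\mu/\sqrt{\mu_{\rm T}}$ and $T_1 = -\tfrac{2\pi}{\mu_{\rm T}}R$, where $R$ is as in \eqref{R}. For $k$ small the scalar $-\tfrac{\mu_{\rm T}\ln k}{2\pi}$ is positive, so the $\sigma_1$-eigenvalue of $Q(-k^2)$ is $\gamma(k) = -\tfrac{\mu_{\rm T}\ln k}{2\pi}\,\omega(k)$, with $\omega(k)$ the simple eigenvalue of $T(-k^2)$ described in Theorem~\ref{thm:perturbation}\,(iv). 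Since $(T_1\varphi,\varphi) = -\tfrac{2\pi}{\mu_{\rm T}^2}(R{\mathbbm 1}_\mu,{\mathbbm 1}_\mu)_{L^2_\mu}$, substituting into \eqref{eq-asymptev} and multiplying by $-\tfrac{\mu_{\rm T}\ln k}{2\pi}$ yields
\[
\gamma(k) = -\frac{\mu_{\rm T}\ln k}{2\pi} + \frac{1}{\mu_{\rm T}}(R{\mathbbm 1}_\mu,{\mathbbm 1}_\mu)_{L^2_\mu} + O\big(\tfrac{1}{\ln k}\big),\qquad k\to 0+,
\]
the $O(1/\ln^2 k)$ remainder in \eqref{eq-asymptev} contributing only $O(1/\ln k)$ after multiplication by $\ln k$.

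Next I would invert. Imposing $\gamma(k(\alpha)) = 1/\alpha$ gives $-\tfrac{\mu_{\rm T}}{2\pi}\ln k(\alpha) = \tfrac{1}{\alpha} - \tfrac{1}{\mu_{\rm T}}(R{\mathbbm 1}_\mu,{\mathbbm 1}_\mu)_{L^2_\mu} + O(1/\ln k(\alpha))$; since the left side tends to $+\infty$ we get $|\ln k(\alpha)|\sim \tfrac{2\pi}{\alpha\mu_{\rm T}}$, so the error is actually $O(\alpha)$. Solving for $\ln k(\alpha)$ and doubling,
\[
2\ln k(\alpha) = -\frac{4\pi}{\alpha\mu_{\rm T}} + \frac{4\pi}{\mu_{\rm T}^2}(R{\mathbbm 1}_\mu,{\mathbbm 1}_\mu)_{L^2_\mu} + O(\alpha),\qquad \alpha\to 0+,
\]
and then $\lambda(\alpha) = -k(\alpha)^2 = -\exp\big(2\ln k(\alpha)\big)$ factorizes as
\[
\lambda(\alpha) = -\exp\Big(\tfrac{4\pi}{\mu_{\rm T}^2}(R{\mathbbm 1}_\mu,{\mathbbm 1}_\mu)_{L^2_\mu}\Big)\big(1 + O(\alpha)\big)\exp\Big(-\tfrac{4\pi}{\alpha\mu_{\rm T}}\Big),
\]
which is precisely \eqref{eq-evexp} with $C_\mu$ as in \eqref{Cmu} (the factor $1+O(\alpha)$ being absorbed into the stated $C_\mu + o(1)$).

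The conceptual work is entirely in the preliminary results, so I do not expect a genuine obstacle; the only point demanding care is the bookkeeping of error terms through the inversion — confirming that the $O(1/\ln^2 k)$ in \eqref{eq-asymptev} degrades only to $O(1/\ln k)$ in $\gamma(k)$, that this in turn becomes $O(\alpha)$ after using $|\ln k(\alpha)|\sim 2\pi/(\alpha\mu_{\rm T})$, and that the continuity and strict monotonicity of $\gamma(\cdot)$ from Lemma~\ref{lem:Q}\,(iv) legitimately produce a (unique) solution $k(\alpha)$ of $\gamma(k(\alpha))=1/\alpha$ for all sufficiently small $\alpha$.
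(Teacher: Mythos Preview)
Your proposal is correct and follows essentially the same route as the paper: you introduce the same rescaled family $T(k)=-\tfrac{2\pi}{\mu_{\rm T}\ln k}Q(-k^2)$, apply Theorem~\ref{thm:perturbation} with $\varphi=\mathbbm{1}_\mu/\sqrt{\mu_{\rm T}}$ and $T_1=-\tfrac{2\pi}{\mu_{\rm T}}R$, and invert the Birman--Schwinger condition to extract $\ln k(\alpha)$. Your error bookkeeping is in fact slightly sharper (you obtain $O(\alpha)$ where the paper records only $o(1)$), but the argument is otherwise identical.
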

\begin{proof} Let us consider the operator-valued function
\begin{equation}
\label{def:T} T(k) := -\frac{2\pi }{\mu_{\rm T} \ln k}
Q(-k^2),\qquad k
>0,
\end{equation}
where $Q(\cdot)$  is defined by \eqref{Q}. Comparing the expansion
from  Proposition~\ref{prop:expansion} and the definition
\eqref{def:T} one can see that the operator-valued function
$T(\cdot)$ reflects the structure assumed in
Theorem~\ref{thm:perturbation}; precisely $\cH = L^2_\mu$ and
\[
T_0 = %\tfrac{{\mathbbm 1}_\mu}{\sqrt{\mu_{\rm T} }}
\varphi_\mu \big(\cdot, \varphi_\mu % \tfrac{{\mathbbm
%1}_\mu}{\sqrt{\mu_{\rm T} }}
\big)_{L^2_\mu}\,, \qquad \label{T1} T_1 = -\frac{2\pi}{\mu_{\rm
T}} R\,,
\]
where $\varphi \equiv \varphi_\mu
:=\frac{{\mathbbm{1}}_\mu}{\sqrt{\mu_{\rm T} }}$. Therefore, for
sufficiently small $k > 0$ the spectrum of $T(k)$ can be separated
into two disjoint parts: $\sigma_0(k)$ located in the neighborhood
of $0$ and $\sigma_1(k)$ consisting of exactly one simple
eigenvalue $\omega(k)$ located in the neighborhood of $1$ and
admitting  the asymptotic expansion
\[
\omega(k) = 1 +\tfrac{1}{\ln k} (T_1\varphi_\mu,\varphi_\mu
)_{L^2_\mu} + O\big(\tfrac{1}{\ln^2 k}\big), \qquad k\rightarrow
0+.
\]
Applying the definition of $T_1$ to the last expansion, we arrive
at
\begin{equation}
\label{omega} \omega(k) = 1 - \tfrac{2\pi}{\mu_{\rm T} ^2\ln k}(R
{\mathbbm  1}_\mu, {\mathbbm  1}_\mu)_{L^2_\mu} +
O\big(\tfrac{1}{\ln^2k}\big),\qquad k\rightarrow 0+.
\end{equation}
Suppose that $\alpha >0$ is sufficiently small, so that $\sharp
\sigma_{\rm d}(H_{\alpha\mu}) =
1$,~cf.~Theorem~\ref{thm:existence}. Let $\lambda(\alpha) =
-k^2(\alpha)$ standardly denote the corresponding unique
eigenvalue of $H_{\alpha\mu}$ which in view of
Theorem~\ref{thm:existence} converges as
 $\lambda(\alpha)\rightarrow 0-$  for $\alpha\rightarrow 0+$.
Combining the Birman-Schwinger principle together with the
definition of $T(\cdot)$ we obtain the following condition
\[
-\frac{1}{2\pi} \alpha\mu_{\rm T} \omega(k(\alpha)) \ln k(\alpha)
=1
\]
for the value $k(\alpha)$. Applying to the above equation the
asymptotic expansion of $\omega(\cdot)$ given by \eqref{omega} we
get
\[
-\frac{\alpha\mu_{\rm T} \ln k(\alpha)}{2\pi} +
\frac{\alpha}{\mu_{\rm T} }(R {\mathbbm  1}_\mu, {\mathbbm
1}_\mu)_{L^2_\mu } + O\big(\tfrac{\alpha}{\ln k(\alpha)}\big) =
1,\qquad\alpha\rightarrow 0+.
\]
The latter  is equivalent to
\begin{equation}\label{eq-lnconv}
\ln k(\alpha) =  -\frac{2\pi}{\alpha \mu_{\rm T} } +
\frac{2\pi}{\mu_{\rm T} ^2}(R {\mathbbm  1}_\mu, {\mathbbm
1}_\mu)_{L^2_\mu  } + o(1),\qquad \alpha\rightarrow 0+,
\end{equation}
which yields
\[
\lambda(\alpha) =  -k(\alpha)^2 = -\big(C_\mu
 + o(1)\big)e^{ -\frac{4\pi}{\alpha \mu_{\rm T} }},\qquad \alpha\rightarrow 0+,
\]
with $C_\mu$ as in \eqref{Cmu}.
\end{proof}

\begin{example}%\marginpar{Some clarifications}
\rm{ We will test the above theorem  on a special model. Namely,
let $\mu$ be defined via a Dirac measure supported on a circle
$C_r$ of radius $r$; precisely
\begin{equation}
\label{muc} \mu (\Omega ) =l (\Omega \cap C_r)\,,
\end{equation}
where $l(\cdot )$ is the one-dimensional measure defined by the
length of the arc. This example was already studied in
\cite{ET04}, where the authors compute negative spectrum of
$H_{\alpha\mu}$ (with $\mu$ as above) using separation of
variables.

In order to recover the asymptotic behavior of the eigenvalue of
$H_{\alpha \mu }$ with $\alpha $ small and  $\mu$ defined by
\eqref{muc}, we will compute the constant $C_\mu$ given
in \eqref{Cmu}. According to \cite[Lemma 3.2]{KV12} we obtain
\begin{equation}
\label{eq0} (Q(-k^2 )\mathbbm{1}_\mu ,  \mathbbm{1}_\mu)_{L^2_\mu}
=2\pi r^2\int_0^\infty\frac{|J_0(y)|^2y}{(kr)^2 + y^2}dy,
\end{equation}
where $J_0(\cdot)$ is the  Bessel function of order $0$. Applying
\cite[Equation 6.535]{GR} in the above formula we arrive at
\begin{equation}
\label{eq1} (Q(-k^2 )\mathbbm{1}_\mu ,
\mathbbm{1}_\mu)_{L^2_\mu}= 2\pi r^2 I_{0}(k r) K_0 (k r).
\end{equation}
Using the asymptotic expansions \cite[9.6.12, 9.6.13]{AS64}
\[
\begin{split}
I_0(x) &= 1 + O(x^2),\qquad x\rightarrow 0+,\\
K_0(x) &= \big(-\ln(x/2) + C_{\rm E}\big) + O(x^2\ln x),\qquad
x\rightarrow 0+,
\end{split}
\]
of $I_0 (\cdot )$ and $K_0 (\cdot )$ in the neighbourhood of zero,
we obtain
\begin{equation}
\label{eq2} I_0 (k r)K_0 (kr)= -\ln \frac{k r}{2} +C_{\rm E}
+O(k),\qquad k\to 0+.
\end{equation}
Combining equations \eqref{eq1} and \eqref{eq2} we get
\begin{equation}\label{eq-Q}
(Q(-k^2 )\mathbbm{1}_\mu ,  \mathbbm{1}_\mu)_{L^2_\mu}= 2\pi
r^2\Big( -\ln \frac{k r}{2} +C_{\rm E} + O(k ) \Big),\quad
k\rightarrow 0+.
\end{equation}
The decomposition stated in Proposition~\ref{prop-decomposition}
yields
\begin{equation}\label{eq-R}
(R {\mathbbm 1}_\mu , {\mathbbm 1}_\mu )_{L^2_\mu }= (Q (-k^2 )
{\mathbbm 1 }_\mu ,{\mathbbm 1}_\mu )_{L^2_\mu}+2\pi r^2 \ln k +O
(k^2 \ln k ),\quad k\rightarrow 0+.
\end{equation}
In fact, the left hand side of \eqref{eq-R} does not depend on
$k$. Consequently, inserting \eqref{eq-Q} into \eqref{eq-R} and
taking the limit $k\to 0+$ we get
\[
(R \mathbbm{1}_\mu , \mathbbm{1}_\mu )_{L^2_\mu }= 2\pi r^2 \Big(
-\ln \frac{r}{2} +C_{\rm E}  \Big)\,.
\]
In view of  \eqref{Cmu} this implies that $C_\mu = \frac{4}{r^2
}\exp(2C_{\rm E})$ and finally
\[
\lambda (\alpha )=-\frac{4}{r^2} \mathrm{e}^{2C_{\rm E} }
\mathrm{e}^{-\frac{2}{\alpha r}}(1+o(1)),\qquad \alpha\rightarrow
0+,
\]
which is fully consistent with a result of \cite[Subsection
2.1]{ET04} and furthermore refines that result.}
\end{example}
\begin{remark}
Following the line of \cite{BEKS94} one can introduce a sign
changing weight in $\gamma\in L^\infty(\dR^2)$ and consider more
general operators defined via quadratic forms
\[
\frq_{\alpha\gamma\mu}[f] := \|\nabla f\|^2_{{\mathbb L}^2} -
\alpha\int_{\dR^2}\gamma(x)|f(x)|^2d\mu(x),\quad \dom
\frq_{\alpha\gamma\mu} = H^1.
\]
In this case one can get the asymptotics similar to
\eqref{eq-evexp} with $\gamma$ involved. Instead of $\mu(\dR^2)$
in the exponent there will be $I := \int_{\dR^2}\gamma(x) d\mu(x)
> 0$. The asymptotics could be different if $I = 0$. This case
requires special analysis.
\end{remark}

\subsection{Asymptotics of the eigenfunction corresponding to the weakly coupled bound state}
\label{ssec:main2}
%Throughout this section we use the following hypothesis.
%\begin{hyp}
%\label{hyp:measure2} Let $\mu$ be a positive Radon measure as in
%Hypothesis~\ref{hyp:measure} such that its support is a compact
%subset of $\dR^2$.
%\end{hyp}
As we have shown in the previous section the operator
$H_{\alpha\mu}$ has exactly one negative eigenvalue for
sufficiently small $\alpha > 0$. The aim of this section is to
recover the asymptotic behaviour of the corresponding
eigenfunction in the limit $\alpha \to 0 +$.
\begin{thm} Let $\mu$ be a compactly supported positive finite Radon measure on $\dR^2$ from
the generalized Kato class. Let $\lambda (\alpha)$ be the unique
eigenvalue of $H_{\alpha \mu }$ for $\alpha \to 0+$ and
$-k_\alpha ^2 =\lambda (\alpha )$. Then the corresponding
eigenfunction has the form
\[
f_\alpha(\cdot)  =   \frac{k_\alpha}{2\pi} \int_{\dR^2} K_0
(k_\alpha| \cdot - y|) d\mu (y) + O\Big( \frac{1}{\ln k_\alpha}
\Big),\qquad\alpha\rightarrow 0+,
\]
where the error term is understood in the sense of $L^2$-norm;
moreover the $L^2$-norm of $f_\alpha$ has non-zero finite limit as
$\alpha\rightarrow 0+$.
\end{thm}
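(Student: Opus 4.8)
The plan is to combine the Birman--Schwinger correspondence from Proposition~\ref{prop:BS} with the asymptotic analysis of the operators $Q(\cdot)$ and $R_{\mu\,dx}(\cdot)$ developed in the preliminaries. Recall that by Proposition~\ref{prop:BS} the eigenfunction $f_\alpha$ of $H_{\alpha\mu}$ at $\lambda(\alpha) = -k_\alpha^2$ is, up to a scalar multiple, equal to $R_{\mu\,dx}(-k_\alpha^2)h_\alpha$, where $h_\alpha \in L^2_\mu$ spans $\ker(I - \alpha Q(-k_\alpha^2))$. In the notation of Theorem~\ref{thm:main}, $h_\alpha$ is precisely the eigenfunction $\widetilde\varphi_{k_\alpha}$ (or its phase-rotated version $\varphi_{k_\alpha}$) of $T(k_\alpha)$ corresponding to $\omega(k_\alpha)$, so by Theorem~\ref{thm:perturbation}(iii) applied as in the proof of Lemma~\ref{lem:Q}, one has
\[
h_\alpha = \varphi_\mu + O\big(\tfrac{1}{\ln k_\alpha}\big) = \frac{{\mathbbm 1}_\mu}{\sqrt{\mu_{\rm T}}} + O\big(\tfrac{1}{\ln k_\alpha}\big)
\]
in $L^2_\mu$, after a suitable choice of phase. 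The first step is therefore to fix this normalization and write $f_\alpha = c_\alpha R_{\mu\,dx}(-k_\alpha^2)h_\alpha$ with a scalar $c_\alpha$ to be determined.

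Second, I would substitute the expansion of $h_\alpha$ and use linearity together with the boundedness estimate for $R_{\mu\,dx}(-k^2)$ afforded by Proposition~\ref{prop:Rmu}. Applying Proposition~\ref{prop:Rmu} with $\varphi = {\mathbbm 1}_\mu$ and noting $\wh{{\mathbbm 1}_\mu}(0) = \tfrac{1}{2\pi}\mu_{\rm T}$, we get $k_\alpha^2\|R_{\mu\,dx}(-k_\alpha^2){\mathbbm 1}_\mu\|^2 = \tfrac{\mu_{\rm T}^2}{4\pi} + O(\sqrt{k_\alpha})$, so that $k_\alpha\|R_{\mu\,dx}(-k_\alpha^2){\mathbbm 1}_\mu\| \to \tfrac{\mu_{\rm T}}{2\sqrt{\pi}}$. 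Applying the same proposition to the error term $e_\alpha := h_\alpha - \varphi_\mu$, whose $L^2_\mu$-norm is $O(1/\ln k_\alpha)$, gives $\|R_{\mu\,dx}(-k_\alpha^2)e_\alpha\| = O\big(\tfrac{1}{k_\alpha|\ln k_\alpha|}\big)$ — here I must check that the constant in Proposition~\ref{prop:Rmu} can be controlled by $\|\varphi\|_{L^2_\mu}$, which follows by inspecting its proof (the bound on $\wh\varphi$ and its Lipschitz constant are both linear in $\|\varphi\|_{L^2_\mu}$ by Lemma~\ref{lem:whphi}). Thus, choosing $c_\alpha$ so that $c_\alpha / \sqrt{\mu_{\rm T}} = k_\alpha/(2\pi)$, i.e. normalizing $f_\alpha$ to have the claimed leading term, we obtain
\[
f_\alpha(\cdot) = \frac{k_\alpha}{2\pi}\int_{\dR^2}K_0(k_\alpha|\cdot - y|)\,d\mu(y) + \frac{k_\alpha}{\sqrt{\mu_{\rm T}}}R_{\mu\,dx}(-k_\alpha^2)e_\alpha,
\]
where the second summand has $L^2$-norm $O\big(\tfrac{1}{\ln k_\alpha}\big)$, since $k_\alpha \|R_{\mu\,dx}(-k_\alpha^2)e_\alpha\| = O\big(\tfrac{1}{|\ln k_\alpha|}\big)$; note $\tfrac{k_\alpha}{2\pi}\int K_0(k_\alpha|\cdot-y|)d\mu(y) = k_\alpha R_{\mu\,dx}(-k_\alpha^2){\mathbbm 1}_\mu$ by \eqref{Qkernel} and \eqref{Rmu}. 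This also yields the stated non-zero finite limit of $\|f_\alpha\|$, namely $\tfrac{\mu_{\rm T}}{2\sqrt\pi}$.

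The main obstacle is the uniformity issue in the second step: Proposition~\ref{prop:Rmu} as stated gives an asymptotic for a fixed $\varphi$, but here $e_\alpha$ depends on $\alpha$, so I need a version of that estimate with the error term $O(\sqrt{k})$ replaced by something explicitly dominated by (a constant times) $\|\varphi\|_{L^2_\mu}^2$ uniformly — or more simply, I should prove directly the cruder bound $\|R_{\mu\,dx}(-k^2)\varphi\|_{L^2} \le \tfrac{C}{k}\|\varphi\|_{L^2_\mu}$ valid for all small $k>0$ and all $\varphi \in L^2_\mu$, with $C$ independent of $k$ and $\varphi$. This follows from the Fourier representation in Lemma~\ref{lem:Rmu}: $k^2\|R_{\mu\,dx}(-k^2)\varphi\|^2 = \int_{\dR^2}\tfrac{|\wh\varphi(kt)|^2}{(|t|^2+1)^2}dt \le \|\wh\varphi\|_\infty^2 \int_{\dR^2}\tfrac{dt}{(|t|^2+1)^2} = \pi\|\wh\varphi\|_\infty^2 \le \tfrac{1}{4\pi}\|\varphi\|_{L^1_\mu}^2 \le \tfrac{\mu_{\rm T}}{4\pi}\|\varphi\|_{L^2_\mu}^2$ by Cauchy--Schwarz and finiteness of $\mu$. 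With this uniform bound in hand the rest is routine bookkeeping, and one concludes the theorem; the interpretation of the error "in the strong sense" (as in the statement of the main theorem) is simply that it holds in $L^2$-norm, which is what the argument delivers.
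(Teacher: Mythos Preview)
Your proposal is correct and follows essentially the same route as the paper: Birman--Schwinger to write the eigenfunction as $R_{\mu\,dx}(-k_\alpha^2)$ applied to the $L^2_\mu$-eigenvector, Theorem~\ref{thm:perturbation}(iii) to expand that eigenvector as $\mathbbm 1_\mu$ plus an $O(1/\ln k_\alpha)$ remainder, and Proposition~\ref{prop:Rmu} for the norm of the leading term. The one point where you are actually more careful than the paper is the uniformity issue you flag: the paper simply invokes Proposition~\ref{prop:Rmu} together with the H\"older bound $|\wh\varsigma_\alpha(0)|\le \tfrac{\sqrt{\mu_{\rm T}}}{2\pi}\|\varsigma_\alpha\|_{L^2_\mu}$ to control $k_\alpha\|R_{\mu\,dx}(-k_\alpha^2)\varsigma_\alpha\|$, implicitly using that the $O(\sqrt{k})$ term in that proposition is governed by $\|\wh\varphi\|_\infty$ and the Lipschitz constant of $\wh\varphi$, both linear in $\|\varphi\|_{L^2_\mu}$; your direct inequality $k^2\|R_{\mu\,dx}(-k^2)\varphi\|^2\le \tfrac{\mu_{\rm T}}{4\pi}\|\varphi\|_{L^2_\mu}^2$ via the Fourier representation is a cleaner and fully self-contained way to dispatch the remainder.
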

\begin{proof}
In the proof of this theorem we rely on Proposition~\ref{prop:BS}.
For non-trivial $\phi_\alpha \in \ker(I-\alpha Q(-k_\alpha ^2 ))$
the function
\begin{equation}\label{eq-normout}
g_\alpha(\cdot) := R_{\mu\, dx } (-k_\alpha^2)\phi_\alpha =
\frac{1}{2\pi}\int_{\dR^2} K_0(k_\alpha|\cdot-y|)\phi_\alpha (y)
d\mu(y),
\end{equation}
reproduces the eigenfunction of $H_{\alpha \mu}$. Similarly as in
the proof of Theorem~\ref{thm:main} we conclude that $\phi_\alpha$
is an eigenfunction of the operator
\[
T(k_\alpha) = -\frac{2\pi}{\mu_{\rm T}\ln(k_\alpha)}Q(-k^2_\alpha)
\]
corresponding to the eigenvalue $-\tfrac{2\pi}{\mu_{\rm
T}\ln(k_\alpha)\alpha}$. Recall that the family $\dR_+\ni k\mapsto
T(k)$ is a realization of the operator family considered in
Theorem~\ref{thm:perturbation} with $\cH = L^2_\mu$, $\varphi :=
\tfrac{\mathbbm{1}_\mu}{\sqrt{\mu_{\rm T}}}$, $T_0 :=
\varphi(\cdot,\varphi)$, $T_1 := -\tfrac{2\pi}{\mu_{\rm T}}R$ and
$R$ as in \eqref{R}. Hence, by
Theorem~\ref{thm:perturbation}\,(iii) we obtain that $\phi_\alpha$
can be chosen in the form
\begin{equation}\label{eq-phi}
\phi_\alpha  = \mathbbm{1}_\mu +\varsigma_\alpha,
\qquad\text{where}\quad \|\varsigma_\alpha \|_{L^2_\mu }=
O\Big(\tfrac{1}{\ln k_\alpha }\Big)\quad\text{as}\quad
\alpha\rightarrow 0+.
\end{equation}
By Proposition~\ref{prop:Rmu} we obtain
\begin{equation}
\label{kRmu}
k^2_\alpha \|R_{\mu\,dx} (-k^2_\alpha)\mathbbm{1}_\mu \|^2 =
\frac{\mu_{\rm T}^2}{4\pi}
 + O(\sqrt{k_\alpha}),\qquad \alpha\rightarrow 0+,
\end{equation}
where we used that $\wh{\mathbbm{1}}_\mu(0) =
\tfrac{1}{2\pi}\mu_{\rm T}$.
H\"older inequality
yields
\begin{equation}
\label{kRmu2} |\wh\varsigma_\alpha(0)| \le
\tfrac{1}{2\pi}\|\varsigma_\alpha\|_{L^1_\mu} \le \tfrac{1}{2\pi}
\|\varsigma_\alpha\|_{L^2_\mu}\sqrt{\mu_{\rm T}}.
\end{equation}
Hence, using Proposition~\ref{prop:Rmu}, \eqref{eq-phi} and \eqref{kRmu2}  we get
\begin{equation}
\label{kRmu1} k_\alpha^2 \|R_{\mu\, dx } (-k^2_\alpha
)\varsigma_\alpha  \|^2 = O\Big( \tfrac{1}{\ln^2 k_\alpha
}\Big)\qquad \alpha\rightarrow 0+,
\end{equation}
According to \eqref{eq-normout}, \eqref{eq-phi},
\eqref{kRmu} and \eqref{kRmu1}
\[
f_\alpha := k_\alpha R_{\mu\,dx}\phi_\alpha
\]
is an eigenfunction of $H_{\alpha\mu}$ and satisfies
\[
\|f_\alpha\|_{L^2}\rightarrow \frac{\mu_{\rm
T}}{2\sqrt{\pi}},\qquad\text{as}\quad \alpha\rightarrow 0+,
\]
moreover
\[
f_\alpha(\cdot) = \frac{k_\alpha}{2\pi}
\int_{\dR^2}K_0(k_\alpha|\cdot-y|)d\mu(y) + O\Big(\frac{1}{\ln
k_\alpha}\Big),\qquad \alpha\rightarrow 0+,
\]
holds, and the claim is proven.
\end{proof}
\subsection{Concluding remarks.} The asymptotics of the unique
eigenvalue  as well as the corresponding eigenfunction were proved
for the compactly supported measure $\mu$.
The assumption of the compactness
was essential, for example, for the decomposition
\eqref{eq-Qdecomposition} which was the fundamental tool for the
proof of Theorem~\ref{thm:main}. It seems that the most natural
way is to apply approaching of non-compactly supported
measure $\mu$ by an
appropriate sequence $\mu_n$ of compactly supported
measures. However, we face the problem that the error term $o(1)$ appearing in
\eqref{eq-evexp} is not, generally, uniform with respect to $n$.

\end{document}